\newcommand{\bi}{\textnormal{\textbf{i}}}  \newcommand{\cB}{\mathcal{B}}  \newcommand{\FF}{\mathbb{F}} \newcommand{\ZZ}{\mathbb{Z}} \newcommand{\NN}{\mathbb{N}} 
\newcommand{\cT}{\mathcal{T}}  \newcommand{\cV}{\mathcal{V}} \newcommand{\cW}{\mathcal{W}} \newcommand{\cH}{\mathcal{H}} \newcommand{\cS}{\mathcal{S}} \newcommand{\med}{\;|\;} 
 \DeclareMathOperator{\Aut}{\mathrm{Aut}}     
 \DeclareMathOperator{\bAut}{\mathbf{Aut}}
\DeclareMathOperator{\bPGL}{\mathbf{PGL}} 
\DeclareMathOperator{\bmu}{\boldsymbol{\mu}} 
\DeclareMathOperator{\bGL}{\mathbf{GL}} \DeclareMathOperator{\bSL}{\mathbf{SL}}
\DeclareMathOperator{\bGO}{\mathbf{GO}} \DeclareMathOperator{\GO}{\mathrm{GO}}
\DeclareMathOperator{\chr}{\mathrm{char}\,} \DeclareMathOperator{\TKK}{\mathrm{TKK}}
 \DeclareMathOperator{\gl}{\mathfrak{gl}}
\DeclareMathOperator{\psl}{\mathfrak{psl}}
\newcommand{\Ort}{\mathrm{O}} \newcommand{\bOrt}{\mathbf{O}} 
\newcommand{\GL}{\mathrm{GL}} 
  \newcommand{\cM}{\mathcal{M}}
\newcommand{\cA}{\mathcal{A}} 
 \DeclareMathOperator{\id}{\mathrm{id}}
\DeclareMathOperator{\im}{\textnormal{im}}
\newcommand{\Lie}{\mathrm{Lie}}
\newcommand{\bG}{\mathbf{G}}
\newcommand{\bH}{\mathbf{H}}
\newcommand{\cJ}{\mathcal{J}}
\newcommand{\Tr}{\mathsf{T}} 
\DeclareMathOperator{\tr}{\textnormal{t}} 
\DeclareMathOperator{\diag}{\mathrm{diag}} 
\DeclareMathOperator{\bT}{\mathbf{T}}
\DeclareMathOperator{\Ad}{\mathrm{Ad}}
\newcommand{\cR}{\mathcal{R}}
\DeclareMathOperator{\bStab}{\mathbf{Stab}}
\newcommand{\Sim}{\mathrm{Sim}} \newcommand{\bSim}{\mathbf{Sim}} 
\newcommand{\Iso}{\mathrm{Iso}} \newcommand{\bIso}{\mathbf{Iso}} 
\newcommand{\VtI}{\widetilde{\cV}^{\textnormal{(I)}}}
\newcommand{\VhI}{\cV^{\textnormal{(I)}}}
\newcommand{\TtI}{\widetilde{\cT}^{\textnormal{(I)}}} 
\newcommand{\ThI}{\cT^{\textnormal{(I)}}}
\newcommand{\VIV}{\cV^{\textnormal{(IV)}}}
\newcommand{\TIV}{\cT^{\textnormal{(IV)}}}
\newcommand{\ThIV}{\widehat{\cT}^{\textnormal{(IV)}}}
\DeclareMathOperator{\AlgF}{\mathrm{Alg_{\FF}}} 
\newcommand{\cL}{\mathcal{L}}
\renewcommand{\det}{\mathrm{det}} 
\newtheorem{theorem}{Theorem}
\newtheorem{proposition}[theorem]{Proposition}
\newtheorem{corollary}[theorem]{Corollary}
\theoremstyle{definition}
\newtheorem{notation}[theorem]{Notation}
\newtheorem{remark}[theorem]{Remark}
\newtheorem{remarks}[theorem]{Remarks}
\numberwithin{equation}{section} 
\numberwithin{theorem}{section} 
\begin{document}

\title[Automorphism group schemes of special simple Jordan pairs of types I and IV]
{Automorphism group schemes of \\ special simple Jordan pairs of types I and IV}

\author[D. Aranda-Orna]{Diego Aranda-Orna}
\address{Departamento de Matem\'{a}ticas,
Universidad de Oviedo, 33007 Oviedo, Spain}
\email{diego.aranda.orna@gmail.com}

\author[A. Daza-Garc\'ia]{Alberto Daza-Garc\'ia}
\address{Departamento de Matem\'{a}ticas Aplicadas I,
	Universidad de Sevilla, 41012 Sevilla, Spain}
\email{adaza1@us.es}

\thanks{Both authors are supported by grant PID2021-123461NB-C21, funded by MCIN/AEI/10.13039/501100011033 and by ``ERDF A way of making Europe''.}
\date{}

\begin{abstract}
In this work, the automorphism group schemes of finite-dimensional simple Jordan pairs of types I and IV, and of some Jordan triple systems related to them, are determined. We assume $\chr\FF \neq 2$ for the base field $\FF$.
\end{abstract}

\maketitle

\section{Introduction}
We will always assume that the base field $\FF$ has characteristic different from $2$.

\smallskip

We assume that the reader is familiar with affine group schemes, for which one may consult \cite[Appendix A]{EKmon} or \cite{W79}. Recall from \cite[Theorems 1.38 \& 1.39]{EKmon} that, for two algebras with isomorphic automorphism group schemes, we can transfer the classifications of gradings up to isomorphism, and the classification of fine gradings up to equivalence. Due to the nature of the proof, this also holds if we include other structures such as Jordan triple systems and Jordan pairs (see as an example \cite{D24}). This is the main motivation for the work of the present paper.

\bigskip

The classes of finite-dimensional simple Jordan pairs over an algebraically closed field $\FF$ (see \cite{L75}) are:
\begin{itemize}
	\item ($\text{I}_{m,n}$): $(\cM_{m\times n}(\FF),\cM_{m\times n}(\FF))$, $m\times n$ matrices over $\FF$.
	\item ($\text{II}_n$): $(\cA_n(\FF),\cA_n(\FF))$, $n\times n$ alternating matrices over $\FF$.
	\item ($\text{III}_n$): $(\cH_n(\FF),\cH_n(\FF))$, $n\times n$ hermitian matrices over $\FF$.
	\\
	For the three types above, the quadratic products are given by $Q^\sigma_x(y) := x y^\Tr x$.	
	\item($\text{IV}_n$): $(\FF^n,\FF^n)$ with quadratic products given by $Q^\sigma_x(y)=b(x,y)x-q(x)$, where $b$ is the standard scalar product and $q(x):=\frac{1}{2}b(x,x)$.
	\item($\text{V}$): the bi-Cayley pair.
	\item ($\text{VI}$): the Albert pair.	
\end{itemize}
In the list above, the bi-Cayley and Albert pairs are \emph{exceptional}, and the others (i.e., the pairs of types $\text{I}_{m,n}$, $\text{II}_{n}$, $\text{III}_{n}$ and $\text{IV}_{n}$) are called \emph{special}. The elements of the automorphism groups of finite-dimensional special simple Jordan pairs were described in \cite{S84}, although the structure of these automorphism groups was not given explicitly.

Recall that given a Jordan pair $(\cV^+,\cV^-)$ with trilinear products denoted by $\{\cdot,\cdot,\cdot\}^{\sigma}$ ($\sigma = \pm$) and an involution $(\iota^+, \iota^-)$, it is possible to define a Jordan triple system $(\cV^+,\{\cdot,\cdot,\cdot\})$ where the triple product is given by
$$ \{x,y,z\} := \{x,\iota^+(y),z\}^+ $$
for all $x,y,z\in \cV^+$. And vice versa, for a Jordan triple system $\cT$ with trilinear product $\{\cdot,\cdot,\cdot\}$ we can define a Jordan pair $(\cT,\cT)$ with involution $(\id_\cT, \id_\cT)$, where the trilinear products are given by
$$ \{x,y,z\}^{\sigma} := \{x,y,z\}. $$
A classification (in terms of involutions) of the finite-dimensional simple special Jordan triple systems over an arbitrary field is given in \cite{S85}; unfortunately, since that classification is more general, it does not provide an explicit list of isomorphy classes when the field is algebraically closed.

\bigskip

Our first purpose is to determine the automorphism group schemes of the finite-dimensional simple Jordan pairs of types I and IV, as well as to find the automorphism group schemes of some related Jordan triple systems. In both cases (types I and IV), one of the Jordan triple systems considered in this work (the triple systems $\TtI_{m,n}$ in Notation~\ref{notationPairsTypeI} and $\ThIV(W, b)$ in Notation~\ref{notationType4}) is the one associated to the identity involution of the Jordan pair of the same type, and the other one (the triple systems $\ThI_n$ in Notation~\ref{notationPairsTypeI} and $\TIV(V, b)$ in Notation~\ref{notationType4}) is a Jordan triple system constructed from a Jordan algebra (the two simplest cases).

For the simple Jordan algebras considered in this work, the automorphism group schemes are already well-known (but the results related to automorphism group schemes of Jordan triple systems and Jordan pairs are all original). For simple Jordan pairs of type I, in the case of square matrices, the automorphism group was studied in \cite[Th.7]{J76}; we extend that result to automorphism group schemes, and we also calculate the automorphism group schemes in the case of (nonsquare) rectangular matrices, which covers all cases of type I.

\medskip

This paper is structured as follows. In \S\ref{section.preliminaries} we establish some basic definitions and notation that will be used in further sections. In \S\ref{section.type.IV} and \S\ref{section.type.I}, respectively, the automorphism group schemes of the simple Jordan pairs of types IV and I (and of some related Jordan triple systems) will be determined. Our main results for Jordan pairs are Propositions \ref{AutV.IV}, \ref{automorphismgroupscheme.I.square} and \ref{automorphismgroupscheme.I.rectangle}, whereas for Jordan triple systems these are Corollaries \ref{AutT.IV}, \ref{Aut.TJI}, \ref{Aut.tildeTImn}, \ref{c:fin1}. As an application we show in Corollary \ref{c:fin} that for $n>1$ the Jordan triple systems $\TtI_n$ and $\ThI_n$ (both of which induce isomorphic Jordan pairs) have non-isomorphic automorphism group schemes, what in particular provides an alternative proof to \cite{S85} of the fact that they are not isomorphic Jordan triple systems.

\section{Preliminaries} \label{section.preliminaries}

In this section we will recall the basic definitions of Jordan systems that will be used through the document. 

\medskip

A {\em (linear) Jordan pair} is a pair of vector spaces $\cV=(\cV^+,\cV^-)$ with two trilinear products $\cV^\sigma \times \cV^{-\sigma} \times \cV^\sigma \to \cV^\sigma $, $(x,y,z) \mapsto \{x,y,z\}^\sigma$ satisfying:
\begin{align}
& \{x,y,z\}^\sigma = \{z,y,x\}^\sigma, \\
& [D^\sigma_{x,y}, D^\sigma_{u,v}]
= D^\sigma_{D^\sigma_{x,y}u, v} - D^\sigma_{u, D^{-\sigma}_{y,x}v},
\end{align}
where the $D$-operators are defined by $D^\sigma_{x,y}(z) := \{x,y,z\}^\sigma$. The superscript $\sigma \in \{+, - \}$ is sometimes omitted to simplify the notation. Denote $Q_x(y) := \frac{1}{2}\{x,y,x\}$, which is quadratic on $x \in \cV^\sigma$ and linear on $y \in \cV^{-\sigma}$.

The classification of finite-dimensional simple Jordan pairs over an algebraically closed field can be found in \cite[Chapter~4]{L75}. There is a variation of the TKK-construction of a Lie algebra, starting from a Jordan pair, which the reader may consult in \cite[\S7]{LN19}.

\medskip

A {\em (linear) Jordan triple system} is a vector space $\cT$ with a trilinear product $\cT\times\cT\times\cT\to\cT$, $(x,y,z) \mapsto \{x,y,z\}$, satisfying:
\begin{align}
& \{x,y,z\} = \{z,y,x\}, \\
& [D_{x,y}, D_{u,v}] = D_{D_{x,y}u, v} - D_{u, D_{y,x}v},
\end{align}
where $D_{x,y}(z) := \{x,y,z\}$. Its associated Jordan pair is given by $\cV_\cT := (\cT, \cT)$ and two copies of the triple product.

\medskip

Recall that a \emph{(linear) Jordan algebra} is a (nonassociative) commutative algebra $\cJ$ satisfying the identity $(x^2 y)x = x^2 (yx)$ for all $x,y\in\cJ$. The Jordan triple system associated to $\cJ$ is defined by $\cT_\cJ := \cJ$ with the triple product $\{x,y,z\} := (xy)z + (zy)x - (zx)y$. (This triple product is sometimes scaled by a factor of $2$, which if $\FF$ is algebraically closed it doesn't affect much because in that case both triple systems are isomorphic.) The Jordan pair $\cV_\cJ$ associated to $\cJ$ is defined as the Jordan pair associated to $\cT_\cJ$.

\medskip

An \emph{automorphism} of a Jordan pair $\cV$ is a pair of maps $\varphi=(\varphi^+,\varphi^-) \in \GL(\cV^+) \times \GL(\cV^-)$ such that $\varphi^\sigma(\{x,y,z\}) = \{\varphi^\sigma(x), \varphi^{-\sigma}(y), \varphi^\sigma(z)\}$ for all $x,z\in\cV^\sigma$, $y\in\cV^{-\sigma}$, $\sigma \in \{+,-\}$. Similarly, given a Jordan triple system $\cT$, a map $\varphi \in \GL(\cT)$ is said to be an \emph{automorphism} of $\cT$ if $\varphi(\{x,y,z\}) = \{\varphi(x), \varphi(y), \varphi(z)\}$ for all $x,y,z\in\cT$, or equivalently, if $(\varphi, \varphi)$ is an automorphism of the associated Jordan pair $\cV_\cT$. Note that we can identify $\Aut(\cT) \leq \Aut(\cV)$ through the group monomorphism $\varphi \mapsto (\varphi, \varphi)$.

\bigskip

Let $\AlgF$ denote the category of commutative associative unital $\FF$-algebras. Take $\cR \in \AlgF$. For a Jordan pair $\cV$, we denote $\cV_\cR := (\cV^+_\cR, \cV^-_\cR)$ where $\cV^\sigma_\cR = \cV^\sigma \otimes \cR$, and triple products extended by $\cR$-linearity. The \emph{automorphism group scheme} of $\cV$ is defined by $\bAut(\cV)(\cR) := \Aut_\cR(\cV_\cR)$. Note that $\bAut(\cV)$ is an affine group scheme (which also applies to morphisms in $\AlgF$). Similarly, for a (nonassociative) algebra $\cA$ and a Jordan triple system $\cT$, we will denote $\cA_\cR := \cA \otimes \cR$ and $\cT_\cR := \cT \otimes \cR$, with products extended by $\cR$-linearity. The corresponding automorphism group schemes are defined by $\bAut(\cA)(\cR) := \Aut_\cR(\cA_\cR)$ and $\bAut(\cT)(\cR) := \Aut_\cR(\cT_\cR)$. Note that given a Jordan algebra $\cJ$, we can identify $\bAut(\cJ) \leq \bAut(\cT_\cJ) \leq \bAut(\cV_\cJ)$.

\medskip

Let $\cR \in\AlgF$. For $n\in\NN$, the affine group scheme $\bmu_n$ of the $n$-th roots of unity is defined by
$$\bmu_n(\cR) := \{ r\in \cR \med r^n = 1 \}.$$
The multiplicative group scheme is defined by $\bG_m := \bGL_1$, where $\bGL_n$ is the group scheme of $n \times n$ invertible matrices.
Since $\chr\FF\neq 2$, we have that $\bmu_2$ is isomorphic to the constant group scheme $\ZZ/2\ZZ$ (see \cite[Chap.2, Ex.4]{W79}).

\smallskip

Let $N \colon V \to \FF$ be a polynomial map on a vector space $V$, referred to as a \emph{norm}. Then, a \emph{similitude} (or \emph{similarity}) of the norm $N$ is a map $f \in \GL(V)$ such that there exists $\lambda \in\FF^\times$ satisfying $N(f(v)) = \lambda N(v)$ for all $v\in V$. The scalar $\lambda \in\FF^\times$ is called the \emph{multiplier} of the similitude $f$. The group of similitudes of $N$ is denoted by $\Sim(V,N)$. The similitudes with multiplier $1$ are called \emph{isometries}, and form a group denoted by $\Iso(V,N)$. The affine group schemes $\bSim\big(V,N)$ and $\bIso(V,N)$ are defined similarly.

Following the notation in \cite{KMRT98}, if we have a nondegenerate quadratic form $q$ on $V$, with associated symmetric bilinear form $b$, then $\Sim(V,q)$ is called the \emph{general orthogonal group} of $q$ (or of $b$), and is denoted by $\GO(V,q)$ or $\GO(V,b)$; note also that $\Iso(V,q)$ is just the orthogonal group $\Ort(V,q) = \Ort(V,b)$. The affine group schemes $\bGO(V,b)$ and $\bOrt(V,b)$ are defined similarly.

\medskip

For purposes of notation, we will now recall the definition of a central product of groups (see \cite[Chap.2, p.29]{G80}). Let $\gamma_i \colon H \to H_i$ be group isomorphisms with $H_i \leq Z(G_i)$ for $i = 1,2$. Let $N_\gamma := \{ (\gamma_1(h), \gamma_2(h)^{-1}) \med h \in H \} = \{ (h, \gamma(h)^{-1}) \med h \in H_1 \}$, where $\gamma := \gamma_2 \circ \gamma_1^{-1} \colon H_1 \to H_2$. Then the (outer) \emph{central product} of $G_1$ and $G_2$ relative to $\gamma$ is the group $(G_1 \times G_2) / N_\gamma$, which is just ``gluing'' the two copies of $H$. If there is no ambiguity with the isomorphisms, we will denote it by $G_1 \otimes_H G_2$.

\medskip

For further use, we recall the following:

\begin{proposition}[{\cite[Th.~2.5]{A17}} and its proof] \label{schemesJandJTS}
Let $\FF$ be a field of characteristic different from $2$. Let $J$ be a finite-dimensional central simple Jordan $\FF$-algebra with associated Jordan triple system $\cT$. Then, there is an isomorphism of affine group schemes $\bAut(\cT) \simeq \bAut(J) \times \bmu_2$. Here, $\Aut_\cR(\cT_\cR) = \{ r\varphi \med \varphi \in \Aut_\cR(J_\cR), r \in \bmu_2(\cR) \}$.
\end{proposition}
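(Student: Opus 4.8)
The plan is to exhibit an explicit isomorphism of affine group schemes
\[
\Phi\colon\ \bAut(J)\times\bmu_2\ \longrightarrow\ \bAut(\cT),\qquad (\varphi,r)\longmapsto\bigl(x\mapsto r\,\varphi(x)\bigr),
\]
and to read the description of $\Aut_\cR(\cT_\cR)$ off it. First I would verify that $\Phi$ is a well-defined morphism of group schemes. Since the triple product of $\cT=\cT_J$ is trilinear, for $r\in\bmu_2(\cR)$ the homothety $x\mapsto rx$ satisfies $\{rx,ry,rz\}=r^{3}\{x,y,z\}=r\{x,y,z\}$ (using $r^{3}=r$), hence lies in $\Aut_\cR(\cT_\cR)$; composing with an $\cR$-linear $\varphi\in\Aut_\cR(J_\cR)$ shows $r\varphi\in\Aut_\cR(\cT_\cR)$, that $\Phi$ is multiplicative at each $\cR$, and that it is functorial in $\cR$. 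Since $1_J$ is the unit of $J$ and $xy=\{x,1_J,y\}$ in $J_\cR$, every $\varphi\in\Aut_\cR(J_\cR)$ fixes $1_J$; thus $r\varphi=\id_{\cT_\cR}$ forces $r=r\varphi(1_J)=1$ and then $\varphi=\id$, so $\Phi$ is injective on $\cR$-points. In particular the central subgroup scheme $\bmu_2\cdot\id$ of homotheties meets $\bAut(J)$ trivially, which accounts for the product being direct.

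The core of the argument is surjectivity of $\Phi$ on $\cR$-points. Given $\psi\in\Aut_\cR(\cT_\cR)$, put $e:=\psi(1_J)$. Applying $\psi$ to the identities $\{1_J,1_J,y\}=y$ and $Q_{1_J}(y)=\tfrac12 y$ (both immediate from the definition of $\cT_J$), and using $\psi\{x,y,z\}=\{\psi x,\psi y,\psi z\}$ and $\psi\,Q_x=Q_{\psi x}\,\psi$, one obtains $\{e,e,b\}=b$ and $\{e,b,e\}=b$ for all $b\in\cT_\cR$. Evaluating these brackets inside $J_\cR$ gives $\{e,e,b\}=e^{2}b$ and $\{e,b,e\}=2(eb)e-e^{2}b$; taking $b=1_J$ in the first identity yields $e^{2}=1_J$, and the second then reads $(eb)e=b$, i.e. the multiplication operator $L_e$ by $e$ satisfies $L_e^{2}=\id$. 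Setting $f:=\tfrac12(1_J+e)$, one checks $f^{2}=f$ and $L_e=2L_f-\id$, so $L_e^{2}=\id$ forces $L_f^{2}=L_f$; equivalently the Peirce $\tfrac12$-space of the idempotent $f$ in $J_\cR$ vanishes, so $J_\cR=U_f(J_\cR)\oplus U_{1_J-f}(J_\cR)$ is a direct sum of ideals with units $f$ and $1_J-f$.

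Here the hypothesis that $J$ is \emph{central} simple is used: for a finite-dimensional central simple Jordan algebra the centroid of $J_\cR$ is $\cR$, so the projection of $J_\cR$ onto the ideal $U_f(J_\cR)$ lies in the centroid and is therefore multiplication by an idempotent $s\in\cR$; hence $f=s\cdot 1_J$ and $e=2f-1_J=r\cdot 1_J$ with $r:=2s-1\in\bmu_2(\cR)$. The map $r\psi$ then belongs to $\Aut_\cR(\cT_\cR)$ and fixes $1_J$, so it preserves the product $xy=\{x,1_J,y\}$ of $J_\cR$ and lies in $\Aut_\cR(J_\cR)$; since $r^{2}=1$, we get $\psi=r\cdot(r\psi)=\Phi(r\psi,r)$. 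Therefore $\Phi$ is bijective on $\cR$-points for every $\cR\in\AlgF$, hence an isomorphism of affine group schemes, and unwinding it gives both $\bAut(\cT)\simeq\bAut(J)\times\bmu_2$ and the stated equality $\Aut_\cR(\cT_\cR)=\{r\varphi\med\varphi\in\Aut_\cR(J_\cR),\,r\in\bmu_2(\cR)\}$.

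The step I expect to be the main obstacle is deducing $f\in\cR\cdot 1_J$ from ``$f$ is a Peirce-central idempotent of $J_\cR$''. Since $\cR$ need not be reduced, one cannot simply specialize to field points (where simplicity of $J_K$ would give $f\in\{0,1\}$ at once); one really needs the statement that the ideals of $J_\cR$ are extended from $\cR$, equivalently that the centroid of $J_\cR$ equals $\cR$, which is precisely where central simplicity of $J$ enters. All remaining steps are routine computations with the triple product, the unit $1_J$, and the Peirce decomposition.
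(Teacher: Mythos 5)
Your proof is correct. The paper itself gives no argument for this proposition --- it is imported as \cite[Th.~2.5]{A17} ``and its proof'' --- so there is nothing in-text to compare against; your reconstruction follows the natural route such a proof must take: map $(\varphi,r)\mapsto r\varphi$, and for surjectivity study $e=\psi(1_J)$, deduce $e^2=1_J$ and $L_e^2=\id$ from $\{1,1,y\}=y=\{1,y,1\}$, and then force $e=r\,1_J$ with $r\in\bmu_2(\cR)$, which correctly handles rings $\cR$ with nontrivial idempotents rather than just field points. The one step that should be backed by a reference or a line of proof is your centroid claim: for finite-dimensional central simple $J$ the multiplication algebra is all of $\End_\FF(J)$, so after base change any $\cR$-linear map commuting with all multiplications (such as the projection onto the ideal $U_f(J_\cR)$) centralizes $\End_\cR(J_\cR)$, a full matrix algebra over $\cR$, and hence equals $s\,\id$ with $s\in\cR$ idempotent --- exactly the fact you invoke, and indeed the only place where central simplicity is used.
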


\section{Special Jordan systems of type IV} \label{section.type.IV}
\subsection{Preliminaries for simple Jordan systems of type IV}

\smallskip

\begin{notation} \label{notationType4}
Let $n \in \NN$. Let $W$ be a vector space with $\dim W = n$, and $b \colon W \times W \to \FF$ a nondegenerate symmetric bilinear form. (If the field $\FF$ is algebraically closed, we can assume without loss of generality that $b$ is the standard scalar product on the canonical basis of $\FF^n$.) Consider the simple Jordan pair of type $\text{IV}_n$ given by $\cV = \VIV(W, b) := (W, W)$, and triple products $\cV^\sigma \times \cV^{-\sigma} \times \cV^\sigma \to \cV^\sigma$,
\begin{equation} \label{triple.product.IV}
\{ x,y,z \} := b(x,y)z + b(z,y)x - b(x,z)y.
\end{equation}
The generic trace of $\VIV(W, b)$ is $t(x,y) := b(x,y)$, and the associated quadratic form is $q(x) := \frac{1}{2}b(x,x)$, so that $b(x,y) = q(x+y) - q(x) - q(y)$.
We will also consider the Jordan triple system given by $\cT = \ThIV(W, b) := W$, with the triple product $\cT \times \cT \times \cT \to \cT$ as in Equation~\eqref{triple.product.IV}.

\smallskip

Let $b \colon V \times V \to \FF$ be a nondegenerate symmetric bilinear form  on a vector space $V$ with $\dim V = n-1$. Let $\cJ(V, b) := \FF 1 \oplus V$ be the $n$-dimensional special simple Jordan algebra of bilinear form associated to $(V, b)$. Its product is determined by $1x = x = x1$ and $uv = b(u,v)1$ for $u,v\in V$. If $n = 1$, we just have $\cJ(V, b) := \FF$. We will denote by $\TIV(V, b)$ the $n$-dimensional Jordan triple system associated to $\cJ(V, b)$. 

In the case that $b$ is the standard scalar product on $W = \FF^n$ or $V = \FF^{n-1}$, we may denote $\VIV_n := \VIV(W, b)$, $\ThIV_n := \ThIV(W, b)$, $\TIV_n := \TIV(V, b)$.
\end{notation}

\begin{proposition} \label{isomAlgsBilinearForm}
Let $J = \cJ(V, b) = \FF 1 \oplus V$ be the $n$-dimensional special simple Jordan algebra of bilinear form associated to $(V, b)$. Let $\cT_J =: \TIV(V, b)$ and $\cV_J$ be the Jordan triple system and the Jordan pair associated to $J$, respectively. Extend $b$ to a symmetric bilinear form $\tilde{b}$ on $J$ such that $\tilde{b}(1,1) = 1$ and $\tilde{b}(1,V) = 0$.
Assume that there exists $\bi\in\FF$ with $\bi^2 = -1$. Then 
$$ \cV_J = \cV_\cT \cong \VIV(J, \tilde{b}), $$
where $\cV_\cT$ is the Jordan pair associated to the Jordan triple system $\cT = \cT_J$.
\end{proposition}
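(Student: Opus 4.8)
The plan is to exhibit an explicit isomorphism of Jordan pairs $\cV_J \to \VIV(J,\tilde b)$. Both pairs have the same underlying pair of spaces $(J,J)$, so what we need is a pair $\varphi = (\varphi^+,\varphi^-)$ of linear bijections of $J$ intertwining the two triple products. The natural candidate is built from the scalar $\bi$: on the summand $\FF 1$ multiply by $\bi$ (or leave it fixed) and on $V$ leave it fixed (or multiply by $\bi$), the point being to flip the sign of the "$1$-part" of the bilinear form relative to the "$V$-part", since the Jordan-algebra triple product $\{x,y,z\} = (xy)z + (zy)x - (zx)y$ does not treat $J$ symmetrically the way \eqref{triple.product.IV} does. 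Concretely I would first write out $\{x,y,z\}_J$ for $x = \alpha 1 + u$, $y = \beta 1 + v$, $z = \gamma 1 + w$ with $\alpha,\beta,\gamma\in\FF$, $u,v,w\in V$, using $1x = x$ and $uv = b(u,v)1$, and compare it termwise with $\tilde b(x,y)z + \tilde b(z,y)x - \tilde b(x,z)y$.

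The first key step is that computation: expanding $(xy)z + (zy)x - (zx)y$ gives a $\FF1$-component and a $V$-component, and one checks that up to a sign on the part involving $b$ on $V$ versus the part involving the scalars, it agrees with the type $\mathrm{IV}$ formula for $\tilde b$. This is the routine-but-essential identity; I expect the discrepancy to be exactly a sign twist on $V$ (or on $\FF1$), which is precisely what conjugating by the map $\psi\colon \alpha 1 + u \mapsto \alpha 1 + \bi u$ (so $\psi^2$ multiplies $V$ by $-1$) corrects. The second step is to package this: set $\varphi^+ = \varphi^- = \psi$ (or possibly $\varphi^+ = \id$, $\varphi^- = \psi^2$, depending on how the sign lands after the computation) and verify $\varphi^\sigma\{x,y,z\}^{\cV_J} = \{\varphi^\sigma x, \varphi^{-\sigma} y, \varphi^\sigma z\}^{\VIV(J,\tilde b)}$ directly from the identity just established, using that $\psi$ is $\tilde b$-semilinear in the sense $\tilde b(\psi a,\psi b) = \tilde b_{\text{twisted}}(a,b)$. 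The "consequently" clause is then immediate: $\cV_\cT$ for $\cT = \cT_J$ is by definition the Jordan pair associated to $\cT_J$, which is $\cV_J$, so the same isomorphism applies.

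The main obstacle — really the only delicate point — is getting the bookkeeping of signs exactly right in the termwise comparison, and in particular confirming that the needed correction is a genuine sign flip on a single summand (so that $\bi$ with $\bi^2 = -1$ suffices) rather than something that would require a square root of a more general scalar; this is why the hypothesis $\bi^2 = -1$ appears. One should also check nondegeneracy is preserved (automatic, since $\tilde b(1,1)=1$ and $\tilde b|_V = b$ is nondegenerate) and that $\varphi^\pm \in \GL(J)$ (clear, as $\psi$ is invertible with $\bi$ a unit). Beyond that the argument is a direct verification of one polynomial identity.
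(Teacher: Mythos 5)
There is a genuine gap: the two concrete candidates you commit to both fail, and one of them cannot be repaired at all. Carrying out your own first step, for $x=\alpha 1+u$, $y=\beta 1+v$, $z=\gamma 1+w$ one finds
$\{x,y,z\}_{\cV_J}=\tilde{b}(x,y)z+\tilde{b}(z,y)x-\tilde{b}(x,\bar z)\,\bar y$, where $\overline{\alpha 1+u}:=\alpha 1-u$ is the standard involution of $J$. So the discrepancy with \eqref{triple.product.IV} is \emph{not} ``exactly a sign twist on a single summand'': the outer slots are paired by the untwisted $\tilde b$, while the middle slot appears conjugated and paired by the twisted form. Accordingly, your option $\varphi^+=\varphi^-=\psi$ with $\psi(\alpha 1+u)=\alpha 1+\bi u$ fails already on $\{1,v,1\}$ (for $0\neq v\in V$): the left side is $\psi(v)=\bi v$, the right side is $\{1,\bi v,1\}=-\bi v$. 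Your option $\varphi^+=\id$, $\varphi^-=\psi^2$ fails on $\{u,1,u\}$ with $b(u,u)\neq 0$: the left side is $b(u,u)1$, the right side is $\{u,1,u\}=-b(u,u)1$. Worse, no choice with $\varphi^+=\varphi^-$ can ever work, since such a pair isomorphism would be an isomorphism of the Jordan triple systems $\TIV(V,b)=\cT_J$ and $\ThIV(J,\tilde b)$; these are not isomorphic for $n\geq 2$, as corroborated later in the paper by Corollaries \ref{AutT.IV} and \ref{Aut.TJI} (their automorphism group schemes $\bOrt(J,\tilde b)$ and $\bOrt(V,b)\times\bmu_2$ have different dimensions).

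The missing idea is precisely the one the paper's proof encodes: the two components must treat the decomposition $J=\FF 1\oplus V$ \emph{asymmetrically}, with $\bi$ entering with opposite signs in the $+$ and $-$ parts. The paper takes $\Lambda^\sigma|_V=\id$ and $\Lambda^\sigma(\sigma\bi 1)=1$, i.e.\ $1\mapsto-\sigma\bi 1$; an equivalent variant closer to your $\psi$ is $\varphi^\sigma(\alpha 1+u)=\alpha 1+\sigma\bi u$. Either choice gives $\tilde b(\varphi^\sigma x,\varphi^{-\sigma}y)=\tilde b(x,y)$ (the two factors of $\pm\bi$ cancel), twists the same-sign pairing $\tilde b(\varphi^\sigma x,\varphi^\sigma z)$ by the required sign, and satisfies $\varphi^{-\sigma}(y)=\mp\varphi^\sigma(\bar y)$, which is exactly what absorbs the conjugation on the middle slot in the identity above. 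With that correction, your expand-and-compare plan goes through and essentially reproduces the paper's proof (which performs the comparison on the basis $\cB_V\cup\{\sigma\bi 1\}$); as written, however, the verification step in your proposal would not close.
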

\begin{proof}
Denote $\cV = \VIV(J, \tilde{b})$ and let $\cB_V = \{ v_i \}_{i=1}^{n-1}$ be a basis of $V$.
Consider the basis of $\cV^\sigma$ given by $\cB^\sigma = \cB_V \cup \{ 1 \}$, for $\sigma = \pm$.
The triple products of $\cV$ are given by
\begin{align*}
& \{ v_i, v_j, v_k \} = b(v_i,v_j) v_k + b(v_k,v_j) v_i - b(v_i,v_k) v_j,
\quad \{ 1, 1, 1 \} = 1, \\
& \{ 1, v_i, v_j \} = - \{ v_i, 1, v_j \} = b(v_i,v_j) 1,
\quad \{ 1, 1, v_i\} = - \{ 1, v_i, 1 \} = v_i.
\end{align*}
Now, consider the basis of $\cV_J^\sigma$ given by $\cB^\sigma_J = \cB_V \cup \{ \sigma\bi 1 \}$, for $\sigma = \pm$. The triple products $\{x,y,z\} = (xy)z + (zy)x - (zx)y$ of $\cV_J$ are given by
\begin{align*}
& \{ v_i, v_j, v_k \} = b(v_i,v_j) v_k + b(v_k,v_j) v_i - b(v_i,v_k) v_j,
\quad \{ \sigma \bi 1, -\sigma \bi 1, \sigma \bi 1 \} = \sigma \bi 1, \\
& \{ \sigma \bi 1, v_i, v_j \} = - \{ v_i, - \sigma \bi 1, v_j \} = \sigma\bi b(v_i,v_j) 1,
\quad \{ \sigma \bi 1, - \sigma \bi 1, v_i\} = - \{ \sigma \bi 1, v_i, \sigma \bi 1 \} = v_i.
\end{align*}
Then it is clear that we have an isomorphism $\Lambda = (\Lambda^+, \Lambda^-)$ of Jordan pairs, given by
\begin{equation} \label{eqIsomAlgsBilinearForm}
\Lambda^\sigma \colon \cV_J^\sigma \longrightarrow \cV^\sigma,
\quad v_i \longmapsto v_i, \quad \sigma \bi 1 \longmapsto 1.
\end{equation}
\end{proof}

\subsection{Automorphism group schemes of simple Jordan systems of type IV}

\begin{proposition} \label{AutV.IV}
Let $W$ be a vector space with $\dim W = n \in \NN$. Let $b \colon W \times W \to \FF$ be a nondegenerate symmetric bilinear form on $W$. Then
\begin{equation}
\bAut\big(\VIV(W, b)\big) \simeq \bGO(W, b).
\end{equation}
\end{proposition}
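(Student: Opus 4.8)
The plan is to show that for every $\cR \in \AlgF$, an $\cR$-linear automorphism $\varphi = (\varphi^+, \varphi^-)$ of $\VIV(W,b)_\cR$ is determined by a single similitude $g \in \bGO(W,b)(\cR)$, with $\varphi^+ = g$ and $\varphi^- = \lambda^{-1} g$ where $\lambda$ is the multiplier; conversely every such pair is an automorphism. First I would recover the bilinear form from the triple product: setting $z = x$ in \eqref{triple.product.IV} gives $\{x,y,x\} = 2b(x,y)x - b(x,x)y$, equivalently $Q_x(y) = b(x,y)x - q(x)y$, which exhibits $b$ and $q$ intrinsically up to the scalings inherent in the pair structure. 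Applying $\varphi^+$ to $\{x,y,x\}$ and comparing with $\{\varphi^+x, \varphi^-y, \varphi^+x\}$, and then polarizing, should force the existence of a scalar $\lambda(\varphi) \in \cR^\times$ with $b(\varphi^+ x, \varphi^- y) = \lambda(\varphi)\, b(x,y)$ and $b(\varphi^+ x, \varphi^+ z) = \lambda(\varphi)\, b(x,z)$ — the key point being that nondegeneracy of $b$ lets one cancel the ``$x$-direction'' terms and isolate the coefficients.

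Next I would deduce from $b(\varphi^+ x, \varphi^+ z) = \lambda b(x,z)$ that $\varphi^+ \in \bGO(W,b)(\cR)$ with multiplier $\lambda$, and from $b(\varphi^+ x, \varphi^- y) = \lambda b(x,y)$ together with nondegeneracy that $\varphi^- = \lambda^{-1}\varphi^+$ (using that $b(\cdot,\cdot)$ extended $\cR$-linearly is still a nondegenerate pairing on $W_\cR$, a standard fact for nondegenerate forms over a ring after base change). This identifies $\bAut(\VIV(W,b))(\cR)$ as a subset of $\bGO(W,b)(\cR)$ via $\varphi \mapsto \varphi^+$; the map is clearly a group homomorphism and is functorial in $\cR$. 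For surjectivity and for the homomorphism property being an isomorphism, I would verify directly that for any $g \in \bGO(W,b)(\cR)$ with multiplier $\lambda$, the pair $(g, \lambda^{-1} g)$ preserves the triple product \eqref{triple.product.IV}: substitute and use $b(gx, gy) = \lambda b(x,y)$ term by term — the three terms $b(x,y)z$, $b(z,y)x$, $b(x,z)y$ each pick up exactly one factor $\lambda$ (the first two from a $b$-pairing of a $+$ and a $-$ vector, which is $\lambda$ times the original since the $-$ side is scaled by $\lambda^{-1}$, the last from a $b$-pairing of two $+$ vectors scaled by $\lambda$), and the outer $g$ applied to the result matches. This shows $\varphi \mapsto \varphi^+$ is an isomorphism of groups, functorial in $\cR$, hence an isomorphism of affine group schemes.

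The main obstacle I anticipate is the first step: extracting the scalar $\lambda$ and the two form-identities cleanly from the automorphism condition over an arbitrary $\cR$, rather than over a field. Over a field one can argue pointwise and use that a vector not killed by a nondegenerate form spans a complement, but over $\cR$ one must be more careful — the cleanest route is probably to fix bases, write the automorphism condition as polynomial identities in the structure constants, and observe that they are equivalent to the matrix equations defining $\bGO(W,b)$ together with $\varphi^- = \lambda^{-1}\varphi^+$. Alternatively, one can first prove the scheme-theoretic statement by checking it on the representing Hopf algebras, but the bases-and-identities approach is more transparent. I would also note that the rank-one case $n=1$ (where $\VIV_1$ degenerates) should be handled or dismissed explicitly, since there $b$ is $1$-dimensional and $\bGO$ is just $\bG_m$, matching $\bAut$ directly. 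A remark connecting this via Proposition~\ref{isomAlgsBilinearForm} and Proposition~\ref{schemesJandJTS} to the Jordan algebra $\cJ(V,b)$ would round out the picture, but is not needed for the proof itself.
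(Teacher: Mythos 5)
Your overall architecture is the same as the paper's (show $\varphi^+$ is a similitude, that $\varphi^-$ is forced to be $\lambda^{-1}\varphi^+$, that $\varphi\mapsto\varphi^+$ is the functorial isomorphism onto $\bGO(W,b)$, and check the converse directly), but the key identities you write down are inconsistent and the deduction you draw from them fails. If $\varphi^+$ has multiplier $\lambda$ and also $b(\varphi^+x,\varphi^-y)=\lambda\,b(x,y)$, then $b(\varphi^+x,\varphi^-y)=b(\varphi^+x,\varphi^+y)$ for all $x,y$, so nondegeneracy and bijectivity of $\varphi^+$ give $\varphi^-=\varphi^+$, not $\varphi^-=\lambda^{-1}\varphi^+$; insisting on both would force $\lambda=1$ and collapse the answer to $\bOrt(W,b)$, which is the triple-system case (Corollary~\ref{AutT.IV}), not the Jordan-pair case. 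The correct identities are $b(\varphi^+x,\varphi^-y)=b(x,y)$ (the mixed pairing, i.e.\ the generic trace, is preserved on the nose) together with $b(\varphi^\sigma x,\varphi^\sigma z)=\lambda^\sigma b(x,z)$ and $\lambda^+\lambda^-=1$. The same bookkeeping slip appears in your converse check: the three terms of \eqref{triple.product.IV} do \emph{not} each pick up a factor $\lambda$; in every term the $\lambda$ from the multiplier cancels against the $\lambda^{-1}$ carried by $\varphi^-$, and that exact cancellation is why $(g,\lambda^{-1}g)$ is an automorphism.

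The more substantive gap is the one you flag yourself: deriving these identities from the automorphism condition over an arbitrary $\cR\in\AlgF$. Saying one should ``fix bases and observe that the polynomial identities are equivalent to the equations defining $\bGO(W,b)$ together with $\varphi^-=\lambda^{-1}\varphi^+$'' is not an argument --- that equivalence is the whole content of the proposition. The paper fills this hole with two specific devices: first, by \cite[16.7]{L75} the generic minimal polynomial, hence the generic trace $t=b$, is $\bAut(\cV)$-invariant (and this passes to every $\cR$ by extension of scalars), which yields $b(\varphi^+x,\varphi^-y)=b(x,y)$ immediately; second, applying $\varphi^\sigma$ to $Q_x(y)=b(x,y)x-q(x)y$ and using that invariance gives $q(x)\varphi^\sigma(y)=q(\varphi^\sigma(x))\varphi^{-\sigma}(y)$, and choosing $x\in W$ anisotropic over $\FF$ (so $q(x)\in\FF^\times\subseteq\cR^\times$) produces the unit $\lambda^\sigma=q(\varphi^\sigma(x))/q(x)$, the relations $\varphi^\sigma=\lambda^\sigma\varphi^{-\sigma}$, $\lambda^+\lambda^-=1$, and the similitude property of $\varphi^\sigma$. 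Without the trace invariance (or an honest direct manipulation of the trilinear identity replacing it), your first step remains an assertion; repairing that, together with the $\lambda$-bookkeeping above, would bring your sketch in line with a complete proof.
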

\begin{proof}
Denote $\cV = \VIV(W, b)$ and take $\cR \in \AlgF$. Let $\varphi \in \bAut(\cV)(\cR) = \Aut_\cR(\cV_\cR)$. If $\FF$ is algebraically closed, then by \cite[16.7]{L75} the generic minimal polynomial $m(T,X,Y)$ is $\bAut(\cV)$-invariant (that is, we have $m(T,g^+(X),g^-(Y)) = m(T,X,Y)$ for any $g = (g^+,g^-) \in \Aut_\cR(\cV_\cR)$), and consequently the generic trace of $\cV$ (i.e., $t = b$) is also $\bAut(\cV)$-invariant (i.e., $b(g^\sigma(x),g^-(y)) = b(x,y)$ for any $g = (g^+,g^-) \in \Aut_\cR(\cV_\cR)$, $x \in \cV_\cR^\sigma$, $y \in \cV_\cR^{-\sigma}$), that is $\bAut(\cV) = \bAut(\cV, t)$; it is clear that this also holds if $\FF$ is not algebraically closed (which follows by extending scalars). Thus $b(\varphi^\sigma(x), \varphi^{-\sigma}(y)) = b(x,y)$ for any $x,y \in W_\cR$. For $x,y \in W_\cR$, we have $Q_x(y) = b(x,y)x - q(x)y$. Thus
\begin{align*}
b(x,y) & \varphi^\sigma(x) - q(x) \varphi^\sigma(y)
= \varphi^\sigma(Q_x(y)) = Q_{\varphi^\sigma(x)}(\varphi^{-\sigma}(y)) \\
&= b(\varphi^\sigma(x), \varphi^{-\sigma}(y)) \varphi^\sigma(x)
- q(\varphi^\sigma(x)) \varphi^{-\sigma}(y) \\
&= b(x,y) \varphi^\sigma(x) - q(\varphi^\sigma(x)) \varphi^{-\sigma}(y),
\end{align*}
so that 
\begin{equation}
q(x) \varphi^\sigma(y) = q(\varphi^\sigma(x)) \varphi^{-\sigma}(y)
\end{equation}
for $\sigma = \pm$. It follows that $q(x) \neq 0$ if and only if $q(\varphi^\sigma(x)) \neq 0$. Given $x\in W_\cR$ with $q(x) \in \cR^\times$, we have that
$$ \varphi^\sigma(y) = \lambda^\sigma \varphi^{-\sigma}(y), $$
where $\lambda^\sigma = \frac{q(\varphi^\sigma(x))}{q(x)}$ does not depend on the choice of $x$. It follows that
$$ \varphi^+(y) = \lambda^+ \varphi^-(y) = \lambda^+ \lambda^- \varphi^+(y), $$
so that $\lambda^+ \lambda^- = 1$ and $\lambda^\sigma \in \cR^\times$. For each $x,y \in W_\cR$ we have
$$ b(x,y) = b(\varphi^+(x), \varphi^-(y)) = \lambda^{-\sigma} b(\varphi^\sigma(x), \varphi^\sigma(y)), $$
so that $b(\varphi^\sigma(x), \varphi^\sigma(y)) = \lambda^\sigma b(x,y)$ and $q(\varphi^\sigma(x)) = \lambda^\sigma q(x)$, which shows that $\varphi^\sigma \in \GO_\cR(W_\cR, b)$.

Consider the group homomorphism 
\begin{equation} \label{isomGOschemes}
\Phi_\cR \colon \Aut_\cR(\cV_\cR) \to \GO_\cR(W_\cR, b), \quad \varphi \mapsto \varphi^+,
\end{equation}
which is injective because $\varphi^-$ is the dual inverse of $\varphi^+$ relative to the generic trace $t := b$. Take $\varphi^+ \in \GO_\cR(W_\cR, b)$ and let $\lambda^+ \in \cR^\times$ such that $q(\varphi^+(x)) = \lambda^+ q(x)$ for all $x\in W_\cR$. Set $\lambda^- = (\lambda^+)^{-1}$ and $\varphi^- = \lambda^- \varphi^+$. Then
$$ b(\varphi^+(x), \varphi^-(y)) = \lambda^- b(\varphi^+(x), \varphi^+(y))
= \lambda^- \lambda^+ b(x,y) = b(x,y), $$
that is, $\varphi^+$ and $\varphi^-$ are dual inverses of each other. Besides,
$$ q(\varphi^-(x)) = q(\lambda^- \varphi^+(x)) = (\lambda^-)^2 q(\varphi^+(x))
= (\lambda^-)^2 \lambda^+ q(x) = \lambda^- q(x), $$
so that $\varphi^- \in \GO_\cR(W_\cR, b)$. Furthermore,
\begin{align*}
Q_{\varphi^\sigma(x)} & (\varphi^{-\sigma}(y))
= b(\varphi^\sigma(x), \varphi^{-\sigma}(y)) \varphi^\sigma(x)
- q(\varphi^\sigma(x)) \varphi^{-\sigma}(y) \\
&= b(x,y) \varphi^\sigma(x) - \lambda^\sigma q(x) \varphi^{-\sigma}(y)
= b(x,y) \varphi^\sigma(x) - \lambda^\sigma \lambda^{-\sigma} q(x) \varphi^\sigma(y) \\
&= \varphi^\sigma \Big( b(x,y)x - q(x)y \Big) = \varphi^\sigma (Q_x(y)),
\end{align*}
so that $\varphi = (\varphi^+, \varphi^-) \in \Aut_\cR(\cV_\cR)$ and $\Phi_\cR$ is onto. We have proven that $\Phi_\cR$ is an isomorphism, and this clearly defines an isomorphism $\Phi$.
\end{proof}

\begin{corollary} \label{AutT.IV}
Let $W$ be a vector space with $\dim W = n \in \NN$. Let $b \colon W \times W \to \FF$ be a nondegenerate symmetric bilinear form on $W$. Then
$$ \bAut\big(\ThIV(W, b)\big) = \bOrt(W, b). $$
\end{corollary}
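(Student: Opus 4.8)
The key point is that $\bAut(\ThIV(W,b))$ should be exactly the subgroup scheme of $\bAut(\VIV(W,b)) \simeq \bGO(W,b)$ cut out by the diagonal condition $\varphi^+ = \varphi^-$, since an automorphism of the triple system is by definition a $\varphi\in\GL(\cT)$ such that $(\varphi,\varphi)$ is an automorphism of the associated Jordan pair $\cV_{\cT} = \VIV(W,b)$. So first I would fix $\cR\in\AlgF$ and trace through the isomorphism $\phi$ of Proposition~\ref{AutV.IV}: an element of $\bAut(\ThIV(W,b))(\cR)$ corresponds to $\varphi=(\varphi^+,\varphi^-)\in\Aut_\cR(\cV_\cR)$ with $\varphi^+=\varphi^-$, hence to $\varphi^+\in\GO_\cR(W_\cR,b)$ satisfying $\varphi^+ = \varphi^- = \lambda^- \varphi^+ = (\lambda^+)^{-1}\varphi^+$, where $\lambda^+\in\cR^\times$ is the multiplier of $\varphi^+$.

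From $\varphi^+ = (\lambda^+)^{-1}\varphi^+$ and the invertibility of $\varphi^+$ we get $\lambda^+ = 1$, i.e.\ $\varphi^+$ is an isometry. Conversely, if $\varphi^+\in\Ort_\cR(W_\cR,b)$, then its multiplier is $1$, so the dual inverse constructed in Proposition~\ref{AutV.IV} is $\varphi^- = 1^{-1}\varphi^+ = \varphi^+$, and $(\varphi^+,\varphi^+)\in\Aut_\cR(\cV_\cR)$, which under the identification $\Aut(\cT)\leq\Aut(\cV)$ means $\varphi^+\in\bAut(\ThIV(W,b))(\cR)$. This identification is natural in $\cR$, so it gives an isomorphism of affine group schemes $\bAut(\ThIV(W,b)) \simeq \bOrt(W,b)$; since $\bOrt(W,b)$ is literally realized as a closed subgroup scheme of $\bGO(W,b)$ and the identification matches it on the nose, the statement can be written with an equality.

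I do not expect any real obstacle here: the only thing to be careful about is bookkeeping with the multiplier, namely that $\varphi^+=\varphi^-$ forces the multiplier to be $1$ rather than merely a square root of unity (the relation is $\varphi^+=(\lambda^+)^{-1}\varphi^+$, not $(\lambda^+)^{-2}\varphi^+$), and that naturality in $\cR$ of the whole correspondence is inherited directly from Proposition~\ref{AutV.IV}. One could also phrase the argument more structurally: $\bAut(\ThIV(W,b))$ is the equalizer (inside $\bAut(\VIV(W,b))$) of the two projections $\varphi\mapsto\varphi^+$ and $\varphi\mapsto\varphi^-$, and under $\phi$ the second projection becomes $\varphi^+\mapsto (\lambda^+)^{-1}\varphi^+$, so the equalizer is the multiplier-one subgroup scheme $\bOrt(W,b)$. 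Either way the corollary follows immediately from Proposition~\ref{AutV.IV}.
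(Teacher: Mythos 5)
Your proposal is correct and follows essentially the same route as the paper: identify $\bAut(\ThIV(W,b))$ with the diagonal subgroup $\{\varphi^+=\varphi^-\}$ of $\bAut(\VIV(W,b))$ and restrict the isomorphism $\phi$ of Proposition~\ref{AutV.IV}, concluding that the diagonal condition is exactly the multiplier-one (isometry) condition. The only cosmetic difference is that the paper phrases the equivalence via $\varphi^-$ being the dual inverse of $\varphi^+$ with respect to the generic trace ($\phi=(\phi^*)^{-1}$ iff $\phi$ preserves $b$), while you phrase it via the relation $\varphi^-=(\lambda^+)^{-1}\varphi^+$ and the invertibility of $\varphi^+$; both are drawn from the proof of Proposition~\ref{AutV.IV} and are equally valid.
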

\begin{proof}
Denote $\cV = \VIV(W, b)$ and $\cT = \ThIV(W, b)$. We have that $\bAut(\cT) \leq \bAut(\cV)$, where each element $\varphi \in \Aut_\cR(\cT_\cR)$ is identified with $(\varphi, \varphi) \in \Aut_\cR(\cV_\cR)$. The isomorphism in Equation~\eqref{isomGOschemes} shows that $\bOrt(W, b) \lesssim \bAut(\cV)$. Let $\varphi = (\varphi^+, \varphi^-) \in \Aut_\cR(\cV_\cR)$ and $\phi = \varphi^+$. We know that $\varphi^+$ and $\varphi^-$ are dual inverses of each other (where the duality is relative to the generic trace $t$, which is the polar form of $q$). The following statements are equivalent: $\varphi \in \Aut_\cR(\cT_\cR) \iff \varphi^+ = \varphi^- \iff \phi = (\phi^*)^{-1} \iff b(\phi(x), \phi(y)) = b(x,y) \; \forall x,y \in W_\cR \iff \phi\in \Ort_\cR(W_\cR, b) = \bOrt(W, b)(\cR)$, and the result follows.
\end{proof}

\begin{corollary} \label{Aut.TJI}
Let $n \in \NN$ and $\cJ(V, b) = \FF1 \oplus V$ be the $n$-dimensional Jordan algebra of a nondegenerate symmetric bilinear form $b$ on a vector space $V$. Then
\begin{align}
& \bAut\big( \TIV(V, b) \big) \simeq \bOrt(V, b) \times \bmu_2, \\
& \bAut\big( \cJ(V, b) \big) \simeq \bOrt(V, b).
\end{align}
\end{corollary}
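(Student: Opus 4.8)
The plan is to deduce both isomorphisms from the results already proven. For the second isomorphism, $\bAut(\cJ(V,b)) \simeq \bOrt(V,b)$: recall that an automorphism of the Jordan algebra $\cJ(V,b) = \FF 1 \oplus V$ must fix the identity element $1$, hence it must preserve the complementary subspace $V$ (which can be characterized intrinsically, e.g.\ as the set of elements $x$ with $x^2 \in \FF 1$ orthogonal to $1$, or simply as the orthogonal complement of $1$ with respect to the generic trace). Since $uv = b(u,v)1$ for $u,v \in V$, a map fixing $1$ and restricting to $\phi \in \GL_\cR(V_\cR)$ is an algebra automorphism if and only if $b(\phi(u),\phi(v)) = b(u,v)$ for all $u,v$, i.e.\ $\phi \in \Ort_\cR(V_\cR, b)$. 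This works functorially over every $\cR \in \AlgF$, giving the isomorphism of group schemes. First I would write this out carefully, working over an arbitrary $\cR$.

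For the first isomorphism, the plan is to combine Proposition~\ref{schemesJandJTS} with the description of $\bAut(\cJ(V,b))$ just obtained. Since $\cJ(V,b)$ is a finite-dimensional central simple Jordan algebra over $\FF$ (with $\chr \FF \neq 2$), Proposition~\ref{schemesJandJTS} gives $\bAut(\cT_{\cJ(V,b)}) \simeq \bAut(\cJ(V,b)) \times \bmu_2$. By Notation~\ref{notationType4}, $\TIV(V,b)$ is precisely $\cT_{\cJ(V,b)}$, so $\bAut(\TIV(V,b)) \simeq \bAut(\cJ(V,b)) \times \bmu_2 \simeq \bOrt(V,b) \times \bmu_2$ using the second isomorphism. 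One should double-check the centrality and simplicity hypotheses of Proposition~\ref{schemesJandJTS} are met: $\cJ(V,b)$ is simple as soon as $\dim V \geq 1$ (and for $\dim V = 0$, i.e.\ $n=1$, the statement degenerates to $\bAut(\FF) \simeq \{1\}$ and $\bAut(\cT) \simeq \bmu_2$, which should be noted as a trivial special case), and it is central because $b$ is nondegenerate.

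I expect the only mild obstacle to be the intrinsic characterization of $V$ inside $\cJ(V,b)$ over a general base ring $\cR$ — one wants to be sure that every $\cR$-linear automorphism of $\cJ(V,b)_\cR = \cR 1 \oplus V_\cR$ fixes $1$ and preserves $V_\cR$, not merely the $\FF$-points. Fixing $1$ follows since $1$ is the unique unit element and automorphisms preserve units; preserving $V_\cR$ then follows because $V_\cR$ is the orthogonal complement of $\cR 1$ under the generic trace form (which is $\bAut$-invariant, as in the proof of Proposition~\ref{AutV.IV}), or alternatively because $V_\cR = \{ x \in \cJ(V,b)_\cR : \tilde{b}(x,1) = 0 \}$ with $\tilde b$ the extension of $b$. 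Everything else is bookkeeping. I would present the argument in the order: (i) prove $\bAut(\cJ(V,b)) \simeq \bOrt(V,b)$ directly; (ii) invoke Proposition~\ref{schemesJandJTS} together with (i) to get $\bAut(\TIV(V,b)) \simeq \bOrt(V,b) \times \bmu_2$; (iii) remark on the degenerate case $n=1$.
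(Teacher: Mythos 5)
Your plan is correct, and for the first isomorphism it coincides with the paper's proof: both you and the author obtain $\bAut(\TIV(V,b)) \simeq \bAut(\cJ(V,b)) \times \bmu_2$ from Proposition~\ref{schemesJandJTS} and then substitute the computation of $\bAut(\cJ(V,b))$, treating $n=1$ as a trivial case. The genuine difference is in how $\bAut(\cJ(V,b)) \simeq \bOrt(V,b)$ is established. The paper identifies $\bAut(J)$ with the stabilizer $\bStab_{\bAut(\cV_J)}(1^+,1^-)$ via \cite[Prop.~3.1]{AC21}, transports it through the isomorphism $\cV_J \cong \VIV(J,\tilde{b})$ of Proposition~\ref{isomAlgsBilinearForm} (which needs $\sqrt{-1}$, hence a scalar extension followed by a descent remark) and then invokes Proposition~\ref{AutV.IV}; you instead compute $\Aut_\cR(J_\cR)$ directly. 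Your route is more elementary and self-contained, avoiding both the field extension and the external stabilizer result, and it is viable.

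The one step you flag yourself --- why every $\varphi \in \Aut_\cR(J_\cR)$ preserves $V_\cR$ --- is precisely where your proposed justifications are too loose. The characterization of $V_\cR$ by ``$x^2 \in \cR 1$'' is unreliable over a ring with zero divisors or nilpotents, and the invariance of the generic trace ``as in the proof of Proposition~\ref{AutV.IV}'' is a statement about the pair $\VIV(W,b)$, not about the algebra $J$: to use it you must first identify the generic trace of $\cV_J$ with (a scalar multiple of) $\tilde{b}$, an extra step you have not supplied and which essentially re-enters the territory of Proposition~\ref{isomAlgsBilinearForm}. Fortunately there is a short elementary patch that keeps your approach intact: since $\varphi(1)=1$, write $\varphi(v) = \alpha(v)1 + f(v)$ with $\alpha\colon V_\cR \to \cR$ and $f\colon V_\cR \to V_\cR$ linear; because $\varphi^{-1}$ also fixes $1$, the triangular block form shows $f$ is invertible; comparing $V_\cR$-components in $\varphi(u)\varphi(v) = b(u,v)1$ gives $\alpha(u)f(v) + \alpha(v)f(u) = 0$, hence $\alpha(u)v + \alpha(v)u = 0$ after applying $f^{-1}$, and evaluating on basis vectors (using $\chr\FF \neq 2$ when $\dim V = 1$) forces $\alpha = 0$; the $\cR 1$-components then give $b(f(u),f(v)) = b(u,v)$, and $\varphi \mapsto f$, with inverse $f \mapsto \id_{\cR 1}\oplus f$, is functorial in $\cR$. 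Finally, your justification of the hypotheses of Proposition~\ref{schemesJandJTS} (``simple as soon as $\dim V \geq 1$, central because $b$ is nondegenerate'') is inaccurate at $\dim V = 1$, where $\cJ(V,b)$ is either $\FF\times\FF$ (not simple) or a quadratic field extension (not central); the paper applies that proposition in the same breadth without comment, but if you spell out the hypotheses you should either exclude $n=2$ there or handle that case separately.
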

\begin{proof}
Let $J = \cJ(V, b)$ and $\cT = \TIV(V, b)$. Then by Prop.~\ref{schemesJandJTS} we have $\bAut(\cT) \simeq \bAut(J) \times \bmu_2$, and it remains to prove the second isomorphism.

By \cite[Prop.~3.1]{AC21} we have that $\bAut(J) = \bStab_{\bAut(\cT_J)}(1) = \bStab_{\bAut(\cV_J)}(1^+,1^-) =: \bG$. Let $\cV = \VIV(J, \widetilde{b})$ be as in Prop.~\ref{isomAlgsBilinearForm} and $\bG' := \bStab_{\bAut(\cV)}(1^+,1^-)$. By extending the field $\FF$ if necessary, we can consider the isomorphism $\Lambda \colon \cV_J \to \cV$ in Equation~\eqref{eqIsomAlgsBilinearForm}, which induces an isomorphism
$$ \bG(\cR) \longrightarrow \bG'(\cR),
\qquad \varphi \longmapsto \Lambda_\cR \circ \varphi \circ \Lambda^{-1}_\cR. $$
On the other hand, the restriction of isomorphism in Equation~\eqref{isomGOschemes} defines a monomorphism
$$ \bG'(\cR) \longrightarrow \Ort_\cR(V_\cR, b),
\qquad \varphi = (\varphi^+,\varphi^-) \longmapsto (\varphi^+)|_V, $$
which is well-defined, because the similitude $\varphi^+$ has multiplier $1$ since $\varphi^+(1)=1$, and so it fixes the subspace $1^\perp = V$. The composition of both maps defines a monomorphism
\begin{equation} \label{isom.IV.proof.coro2}
\bG(\cR) \longrightarrow \Ort_\cR(V_\cR, b), \qquad
\varphi = (\varphi^+, \varphi^-) \longmapsto (\varphi^+)|_V.
\end{equation}
In general (without a field extension), the map in Equation~\eqref{isom.IV.proof.coro2} still defines a group monomorphism; the surjectivity follows since each $f \in \Ort_\cR(V_\cR, b)$ extends to an element $\varphi \in \Aut_\cR(J_\cR)$ determined by $\varphi(1) = 1$, $\varphi|_V = f$. It is clear that Equation~\eqref{isom.IV.proof.coro2} defines an isomorphism $\bG \simeq \bOrt(V, b)$. Note that the result also holds for the trivial case with $n = 1$, where $\bAut(J) \simeq \mathbf{1}$ (the trivial group scheme) and $\bAut(\cT) \simeq \bmu_2$.
\end{proof}

\section{Special Jordan systems of type I} \label{section.type.I}
\subsection{Preliminaries for simple Jordan systems of type I}

\begin{notation} \label{notationPairsTypeI}
Recall from \cite[Chapter~4]{L75} that Jordan pairs of type $\text{I}_{m,n}$, for $m \leq n \in\NN$, are given by $\VtI_{m,n} := \big( \cM_{m,n}(\FF), \cM_{m,n}(\FF) \big)$ with triple products $\{x,y,z\}^\sigma := x y^\Tr z + z y^\Tr x$ for $\sigma = \pm$. The quadratic products are $Q^\sigma_x(y) := x y^\Tr x$. The generic trace of $\VtI_{m,n}$ is $\tr(x, y) := \text{tr}(xy^\Tr)$ where $\text{tr}$ denotes the matrix trace. We will also consider the Jordan triple systems of type $\text{I}_{m,n}$ defined by $\TtI_{m,n} := \cM_{m,n}(\FF)$ with triple products $\{x,y,z\} := x y^\Tr z + z y^\Tr x$. We will also denote $\VtI_n := \VtI_{n,n}$, $\TtI_n := \TtI_{n,n}$.

Another well-known construction for special simple Jordan pairs of type $\text{I}_{m,n}$, which can be found in \cite{S84}, is given by $\VhI_{m,n} := \big( \cM_{m,n}(\FF), \cM_{n,m}(\FF) \big)$, with triple products $\{x,y,z\}^\sigma := xyz + zyx$. Here, $Q^\sigma_x(y) := xyx$. In the case $m = n$, the Jordan pair $\VhI_n := \VhI_{n,n}$ is associated to the Jordan triple system $\ThI_n := \cM_n(\FF)$, with triple products $\{x,y,z\} := xyz + zyx$. Recall that $\cM_n(\FF)^{(+)} := \cM_n(\FF)$ is a Jordan algebra with the symmetric product $x \circ y := \frac{1}{2}(xy + yx)$. Then the Jordan triple system associated to $\cM_n(\FF)^{(+)}$ is $\ThI_n$, because it has triple products
$\{x,y,z\} := 2 \big( (x \circ y) \circ z + (z \circ y) \circ x - (z \circ x) \circ y \big)
= xyz + zyx$. Consequently, the Jordan pair associated to $\cM_n(\FF)^{(+)}$ is $\VhI_n$.
We will show that the automorphism group schemes of $\ThI_n$ and $\TtI_n$ are not isomorphic, which implies that $\ThI_n \ncong \TtI_n$.

Note that there is an isomorphism $\varphi=(\varphi^+,\varphi^-) \colon \VtI_{m,n} \to \VhI_{m,n}$ given by $\varphi^+(x) = x$ and $\varphi^-(y) = y^\Tr$. Therefore, the generic trace of $\VhI_{m,n}$ is $\tr(x, y) := \text{tr}(xy)$, and we get an isomorphism $\bAut(\VtI_{m,n}) \simeq \bAut(\VhI_{m,n})$ given by 
\begin{equation} \label{AutVIisomorphism}
\Aut_\cR \big( (\VtI_{m,n})_\cR \big) \longrightarrow \Aut_\cR \big( (\VhI_{m,n})_\cR \big), \qquad
f = (f^+, f^-) \longmapsto (f^+, \widetilde{\sigma}_\cR(f^-)),
\end{equation}
where
\begin{equation} \label{sigmaIsomorphism}
\widetilde{\sigma}_\cR \colon \GL_\cR(\cM_{m,n}(\cR)) \longrightarrow \GL_\cR(\cM_{n,m}(\cR)), \qquad
\widetilde{\sigma}_\cR(\psi)(X) := \psi(X^\Tr)^\Tr.
\end{equation}
\end{notation}

\begin{remark} \label{block.decomposition}
Let $m,n\in\NN$ and $k = m+n$. Consider the Lie algebra $\cL = \cM_k(\FF)^{(-)} := \cM_k(\FF)$ with the product $[x,y] := xy-yx$. Then the block decomposition
\begin{equation*}
\cL \equiv \left(\begin{array}{c|c} \cM_m(\FF) & \cM_{m,n}(\FF) \\
\hline \cM_{n,m}(\FF) & \cM_n(\FF) \end{array}\right)
\end{equation*}
defines a $\ZZ$-grading $\cL = \bigoplus_{i=-1}^{1} \cL_i$, where $\cL_0 \equiv \cM_m(\FF) \oplus \cM_n(\FF)$ corresponds to the diagonal blocks, and where we identify $\cL_1 \equiv \cM_{m,n}(\FF)$, $\cL_{-1} \equiv \cM_{n,m}(\FF)$. Recall that any $\ZZ$-graded Lie algebra $\cL = \bigoplus_{i=-1}^{1} \cL_i$ defines a Jordan pair $\cV_\cL = (\cV^+, \cV^-) := (\cL_1, \cL_{-1})$ with triple products given by $\{x,y,z\} := [[x,y],z]$. In our case, it is easy to see that $\cV_\cL \cong \VhI_{m,n} = \big( \cM_{m,n}(\FF), \cM_{n,m}(\FF) \big)$.
\end{remark}

\begin{notation}
Let $L_a$ and $R_a$ denote, respectively, the left and right multiplications by an element $a$. For each $a\in\GL_m(\FF)$ and $b\in\GL_n(\FF)$, we will also consider the automorphisms
$\widetilde{L}_a, \widetilde{R}_b \in \Aut_\cR\big( (\VtI_{m,n})_\cR \big)$
defined by
\begin{equation} \label{generators.V.I}
\widetilde{L}_a := (L_a, L_{a^\Tr}^{-1}), \qquad \widetilde{R}_b := (R_b, R_{b^\Tr}^{-1}).
\end{equation}
Through the isomorphism in Equation~\eqref{AutVIisomorphism}, these correspond to $\widehat{L}_a, \widehat{R}_b \in \Aut_\cR\big( (\VhI_{m,n})_\cR \big)$ defined by
\begin{equation} \label{generators.V.I.bis}
\widehat{L}_a := (L_a, R_a^{-1}), \qquad \widehat{R}_b := (R_b, L_b^{-1}).
\end{equation}
\end{notation}

\begin{notation} \label{notationBarSchemes}
We will now follow \cite[\S 3.1]{EKmon}, with some notation changes. 
Define the affine group scheme $\overline{\bAut}^{(+)}(\cM_n(\FF))$, where
$\overline{\bAut}^{(+)}(\cM_n(\FF))(\cR) := \overline{\Aut}^{(+)}_\cR(\cM_n(\cR))$
is the set consisting of all the maps $\cM_n(\cR) \to \cM_n(\cR)$ of the form
\begin{equation} \label{barSchemePlus}
X \mapsto e_1 \psi(X) + e_2 \psi(X^\Tr),
\end{equation}
where $e_1 \in \cR$ is an idempotent, $e_2 = 1 - e_1$, and $\psi \in \Aut_\cR(\cM_n(\cR))$.
Equivalently, the elements $\varphi \in \overline{\Aut}^{(+)}_\cR(\cM_n(\cR))$ are the $\cR$-linear bijections $\varphi \colon \cM_n(\cR) \to \cM_n(\cR)$ such that there is a direct product decomposition $\cR = \cR_1 \times \cR_2$, where $\cR_i = e_i \cR$ for some idempotents $e_1,e_2 \in \cR$ with $e_1 + e_2 = 1$, and such that $\varphi$ restricts to an automorphism of $\cM_n(\cR_1)$ and an antiautomorphism of $\cM_n(\cR_2)$.

Similarly, we can define the affine group scheme $\overline{\bAut}^{(-)}(\cM_n(\FF))$, where we use the maps
\begin{equation} \label{barSchemeMinus}
X \mapsto e_1 \psi(X) - e_2 \psi(X^\Tr)
\end{equation}
instead of Equation~\eqref{barSchemePlus}. Note that $\overline{\bAut}^{(-)}(\cM_n(\FF))$ was denoted by $\overline{\bAut}(\cM_n(\FF))$ in \cite[\S 3.1]{EKmon}. We claim that
\begin{equation} \label{barSchemeIsom}
\overline{\bAut}^{(+)}(\cM_n(\FF))
\simeq \bAut(\cM_n(\FF)) \rtimes \bmu_2
\simeq \overline{\bAut}^{(-)}(\cM_n(\FF)).
\end{equation}
The proof of the second isomorphism in Equation~\eqref{barSchemeIsom} is known (see \cite[Equation~(3.19)]{EKmon}), and now we will mimic its proof in order to prove the first isomorphism.

Recall that $\bmu_2(\cR)$ can be identified with the set of idempotents of $\cR$ through the map $\tau \mapsto e_1 = \frac{1}{2}(1+\tau)$, and the product of $\bmu_2(\cR)$ is transferred to a product of idempotents of $\cR$ given by
$$ e' * e'' := e'e'' + (1 - e')(1 - e'').$$
Then, the product of the group $\Aut_\cR(\cM_n(\cR)) \rtimes \bmu_2(\cR)$ is given by
$$(\psi', e')(\psi'', e'') = \big( \psi'(e'\psi'' + (1-e')\widetilde{\sigma}_\cR(\psi'')), e'*e'' \big),$$
where $\widetilde{\sigma}_\cR \colon \Aut_\cR(\cM_n(\cR)) \to \Aut_\cR(\cM_n(\cR))$
is defined as in Equation~\eqref{sigmaIsomorphism}.

Let
$X \mapsto e'_1 \psi'(X) + e'_2 \psi'(X^\Tr)$ and
$X \mapsto e''_1 \psi''(X) + e''_2 \psi''(X^\Tr)$
be maps of $\overline{\Aut}^{(+)}_\cR(\cM_n(\cR))$.
Their composition is in $\overline{\Aut}^{(+)}_\cR(\cM_n(\cR))$ because it is given by
\begin{align*}
e'_1 \psi' & \Big( e''_1 \psi''(X) + e''_2 \psi''(X^\Tr) \Big)
+ e'_2 \psi' \Big( e''_1 (\psi''(X))^\Tr + e''_2 (\psi''(X^\Tr))^\Tr \Big) \\
&= \Big( e'_1 e''_1 \psi' \psi'' + e'_2 e''_2 \psi' \widetilde{\sigma}_\cR(\psi'') \Big) (X)
+ \Big( e'_1 e''_2 \psi' \psi'' + e'_2 e''_1 \psi' \widetilde{\sigma}_\cR(\psi'') \Big) (X^\Tr) \\
&= e_1 \psi(X) + e_2 \psi(X^\Tr),
\end{align*}
where $e_1 = e'_1 e''_1 + e'_2 e''_2 = e'_1 * e''_1$, $e_2 = e'_1 e''_2 + e'_2 e''_1 = 1 - e_1$,
$\psi = e'_1 \psi' \psi'' + e'_2 \psi' \widetilde{\sigma}_\cR(\psi'')$.
It follows that an isomorphism 
$\bAut(\cM_n(\FF)) \rtimes \bmu_2 \simeq \overline{\bAut}^{(+)}(\cM_n(\FF))$
is given by
\begin{equation*}
\Aut_\cR(\cM_n(\cR)) \rtimes \bmu_2(\cR) \to  \overline{\Aut}_\cR^{(+)}(\cM_n(\cR)), 
\qquad (\psi, e_1) \mapsto \big( X \mapsto e_1 \psi(X) + (1-e_1) \psi(X^\Tr) \big).
\end{equation*}
For each $\sigma = \pm$, let $\bmu_2^{(\sigma)}$ denote the corresponding copy of $\bmu_2$ in $\overline{\bAut}^{(\sigma)}(\cM_n(\FF))$, so we get
\begin{equation}
\overline{\bAut}^{(\sigma)}(\cM_n(\FF)) = \bAut(\cM_n(\FF)) \rtimes \bmu_2^{(\sigma)} \leq \bGL_n(\cM_n(\FF)).
\end{equation}
\end{notation}

\begin{remarks} \label{remarkSchemes} \ \\
\noindent \textbf{1)} Consider the morphism $\text{Ad} \colon \bGL_n \to \bGL(\cM_n(\FF))$ given by the adjoint representation, i.e., $\text{Ad}_\cR(g) \colon x \mapsto gxg^{-1}$ for $g\in\GL_n(\cR)$, $x\in\cM_n(\cR)$. As shown in \cite[3.1]{EKmon}, the image of this morphism is $\bAut(\cM_n(\FF))$, i.e., it factors through a quotient map $\text{Ad} \colon \bGL_n \to \bAut(\cM_n(\FF))$. Hence, due to the sheaf property of quotient maps \cite[Theorem 15.5]{W79}, for every $\cR \in \AlgF$ and every $\varphi$ in $\bAut(\cM_n(\FF))(\cR)$, there is a faithfully flat extension $f\colon \cR\to \cS$ and $x\in \bGL_n(\cS)$ such that $\text{Ad}_\cS(x)=\bAut(\cM_n(\FF))(f)(\varphi)$. From \cite[Rem.~3.5 and Ex.~A.27]{EKmon} we know that the kernel of $\text{Ad}$ is $\bG_m$ and $\bPGL_n := \bGL_n / \bG_m \cong \bAut(\cM_n(\FF))$ via the adjoint representation.

\noindent \textbf{2)} Our goal now is to prove the isomorphism in Equation~\eqref{isom.Iso.det}; we also need to recall the isomorphism in Equation~\eqref{eqDetSimCentralProduct}, to be used in subsequent results.

From \cite[Theorem 3.4]{W87} and \cite[Corollary 1.4.2]{W87}, we have
\begin{equation} \label{eqDetSimCentralProduct}
	\bSim\big(\cM_n(\FF), \det \big)
	\simeq ( \bGL^2_n / \bT ) \rtimes \bmu_2
	= (\bGL_n \otimes_{\bG_m} \bGL_n ) \rtimes \bmu_2,
\end{equation}
where
\begin{equation*}
	\bT(\cR) := \{ (r1, r^{-1}1) \med r\in \cR^\times \} \cong \cR^\times = \bG_m(\cR);
\end{equation*}
here $\bmu_2$ acts by swapping of the components of $\bGL_n \otimes_{\bG_m} \bGL_n$, which corresponds to the transpositions $\bmu_2^{(+)}$ in $\bSim\big(\cM_n(\FF), \det \big)$ (defined as in Notation~\ref{notationBarSchemes}).

Moreover, for every $\cR \in \AlgF$ and $a,b \in \bGL_n(\cR)$, since the isomorphism given in the theorem sends $a \otimes b \in (\bGL^2_n/\bT)(\cR)$ to $L_aR_{b^\Tr}$, it follows from the sheaf property of quotient maps that for every $\varphi\in \bSim\big(\cM_n(\FF), \det \big)(\cR)$ there is a faithfully flat extension $f\colon \cR\to \cS$, $a,b\in \bGL_n(\cS)$ and $\tau\in \bmu_2^{(+)}(\cS)$ such that $\bSim\big(\cM_n(\FF), \det \big)(f)(\varphi)=L_aR_b\tau$. If we assume, in addition, that $\varphi\in \bIso\big(\cM_n(\FF), \det \big)(\cR)$, then  due to  \cite[Theorem 4.1]{W87} we can choose $a,b$ such that $\det(ab)=1$. Let $\bG$ denote the subgroup scheme of $\bGL_n^2$ whose $\cR$-points are
$$ \bG(\cR)=\{(a,b)\in \bGL^2_n(\cR) \mid \det(ab)=1\}, $$
which is well-defined because it is a fiber product of group schemes;
then, through isomorphism in Equation~\eqref{eqDetSimCentralProduct} we get an isomorphism
\begin{equation}
\varphi_1 \colon (\bG/\bT) \rtimes \bmu_2 \longrightarrow \bIso\big(\cM_n(\FF), \det \big).
\end{equation}

Consider the inclusion
\begin{equation}
\iota\colon \bSL^2_n/\bmu_n = \bSL_n\otimes_{\bmu_n} \bSL_n \longrightarrow \bG/\bT,
\end{equation}
where we identify $\bmu_n$ with the kernel of the morphism $\bSL_n \times \bSL_n \to \bG/\bT$ given by $(a,b) \mapsto a \otimes b$. Then $\iota$ is clearly a morphism, and to show that it is a quotient map, take $x\in (\bG/\bT)(\cR)$. Due to the sheaf property of quotient maps there is a faithfully flat extension $\cR\to \cS$ and $a,b\in\bGL_n(\cS)$ such that $\det(ab)=1$ satisfying that the image of $x$ in $(\bG/\bT)(\cS)$ is $a \otimes b$. Let $\cT=\cS[Y]/\langle Y^n-\det(a)\rangle$, and let $y$ be the class of $Y$ on the quotient $\cT$. Since $\cT$ is a free $\cS$-module, it is a faithfully flat extension of $\cS$ and due to \cite[Theorem 13.3]{W79}, a faithfully flat extension of $\cR$. Since the image of $a \otimes b$ (and thus, the image of $x$) in $(\bG/\bT)(\cT)$ is $\iota(y^{-1}a \otimes yb)$, then the sheaf property of quotient maps is satisfied and therefore $\iota$ is a quotient map. Note that $\iota$ extends to an isomorphism
\begin{equation}
\varphi_2 \colon (\bSL_n\otimes_{\bmu_n} \bSL_n) \rtimes \bmu_2 \longrightarrow (\bG/\bT) \rtimes \bmu_2,
\end{equation}
where $\bmu_2$ acts by swapping of the components. By composition of $\varphi_1$ and $\varphi_2$ we get an isomorphism
\begin{equation} \label{isom.Iso.det}
(\bSL_n \otimes_{\bmu_n} \bSL_n) \rtimes \bmu_2\simeq\bIso\big( \cM_n(\FF), \det \big),
\end{equation}
which sends $a \otimes b$ to $L_aR_{b^\Tr}$, and $\bmu_2$ to the corresponding transpositions in $\bIso\big( \cM_n(\FF), \det \big)$.

\noindent \textbf{3)} Let $n\geq 2$ and consider the Jordan algebra $J = \cM_n(\FF)^{(+)}$. We claim that 
\begin{equation} \label{structureMnPlus}
\bAut(J) = \overline{\bAut}^{(+)}\big(\cM_n(\FF)\big).
\end{equation}

First consider the case with $n\geq 3$. Then by \cite[Th.~5.47]{EKmon} and its proof we know the following facts.
We know that the restriction map $\theta \colon \bAut(U(J), *) \to \bAut(J)$ is an isomorphism of affine group schemes, where we consider the inclusion $\iota \colon J \to U(J) = \cM_n(\FF) \times \cM_n(\FF)^\text{op}$, $X \mapsto (X,X)$, and where $*$ denotes the exchange involution $(X,Y) \mapsto (Y,X)$.
We also know that each $\varphi \in \Aut_\cR(U(J)_\cR, *)$
is of the form
$$ \varphi(e_1X_1 + e_2X_2, e_1Y_1 + e_2Y_2)
= (e_1\psi(X_1) + e_2\psi(Y_2^\Tr), e_1\psi(Y_1) + e_2\psi(X_2^\Tr)),$$
where $e_1 \in \cR$ is an idempotent, $e_2 = 1 - e_1$, and $\psi \in \Aut_\cR(\cM_n(\cR))$.
Therefore, if we apply the isomorphism $\theta$, we get the equality in Equation~\eqref{structureMnPlus}.

Now consider the case with $n = 2$. By the Cayley-Hamilton theorem, for each $x \in \cM_2(\FF)$ we have $x^2 - \tr(x)x + \det(x)1 = 0$. Let $V$ be the traceless subspace of $\cM_2(\FF)$. Thus we have $x^2 = -\det(x)1$ for each $x \in V$, and note that $q(x) := -\det(x)$ is a quadratic form on $V$. Thus we can identify $\cM_2(\FF)^{(+)} = \cJ(V, b)$ where $b$ is obtained by scaling the polar form of $q$. Using Corollary~\ref{Aut.TJI} we see that
$\bOrt(V, b) \simeq \bAut(J) \leq \bIso\big( \cM_2(\FF), \det \big)$.
Consider the morphism $\theta\colon \bAut(\cM_2(\FF))\rtimes \bmu_2^{(+)}\to \bAut(J)$ defined in the obvious way, which defines a monomorphism for each $\cR \in \AlgF$, so $\theta$ is a closed imbedding. In order to prove that $\theta$ is a quotient map, we take $\cR \in \AlgF$ and $\varphi\in \bAut(J)(\cR) \leq \bIso\big( \cM_2(\FF), \det \big)(\cR)$. Due to Remark~\ref{remarkSchemes}-2), there is a faithfully flat extension $f\colon \cR\to \cS$, $a,b\in \bGL_n(\cS)$ with $\det(ab)=1$ and $\tau\in \bmu_2^{(+)}(\cS)$ satisfying that $\bAut(J)(f)(\varphi)=L_aR_b\tau$. Since the automorphism $\varphi$ must preserve the unit, we get $b=a^{-1}$. Hence, $\bAut(J)(f)(\varphi) = \theta_\cS \big( (\Ad_\cS(a),\tau) \big)$, so the sheaf property of quotient maps is satisfied and therefore $\theta$ is a quotient map.
\end{remarks}

\subsection{Automorphism group schemes of simple Jordan systems of type I}

\begin{proposition} \label{automorphismgroupscheme.I.square}
Let $1 < n\in\NN$. Then there is an isomorphism of affine group schemes
\begin{equation}
\bAut(\VhI_n) \simeq \bSim\big(\cM_n(\FF), \det \big)
\simeq (\bGL_n \otimes_{\bG_m} \bGL_n ) \rtimes \bmu_2,
\end{equation}
where $\bmu_2$ acts by swapping of the components of $\bGL_n \otimes_{\bG_m} \bGL_n$. \\
Furthermore, we have an isomorphism
\begin{equation}
\theta\colon  (\bGL_n \otimes_{\bG_m} \bGL_n ) \rtimes \bmu_2^{(+)} \longrightarrow \bAut(\VhI_n)
\end{equation}
determined by
\begin{equation}
\theta_\cR \big( (a \otimes b,\tau) \big) := \widehat{L}_a\widehat{R}_{b^\Tr}\tau
\end{equation}
for every $\cR \in \AlgF$, $a,b\in\bGL_n(\cR)$ and $\tau\in\bmu_2^{(+)}(\cR)$, where $\bmu_2^{(+)}$ acts by swapping on $\bGL_n \otimes_{\bG_m} \bGL_n$.

$\bullet$ For the trivial case we have $\bAut(\VhI_1) \simeq \bG_m$, and
$\Aut_\cR\big((\VhI_1)_\cR\big) = \{ \widehat{L}_a \med a\in \cR^\times \}$.
\end{proposition}
\begin{proof}
We will omit the proof for the trivial case. Let $n > 1$ and $\cV = \VhI_n$.
It is easy to see that $\theta$ is a well-defined morphism, and a closed imbedding. To show that $\theta$ is a quotient map, take $\varphi\in\bAut(\cV)(\cR) = \Aut_\cR(\cV_\cR)$. Let $a := \varphi^+(1)$, $b := \varphi^-(1)$. We claim that $a,b\in\GL_n(\cR)$ and $a^{-1} = b$. Since $D_{1,1} = \id$, we also have that
$D_{a,b} = D_{\varphi^+(1), \varphi^-(1)} = \varphi^+ \circ D_{1,1} \circ (\varphi^+)^{-1} = \id$.
Then, for each $x\in\cM_n(\cR)$ we have that $x = D_{a,b}(x) = \frac{1}{2}(abx + xba)$. By taking $x = E_{ii}$, it follows that $ab$ and $ba$ are diagonal, so we can write $ab = \diag(\lambda_1,\hdots,\lambda_n)$ and $ba = \diag(\mu_1,\hdots,\mu_n)$, and then by taking $x = E_{ij}$ it follows that $\mu_i+\lambda_j = 2$ for each $1\leq i,j \leq n$, thus $ab = \lambda 1$, $ba = \mu 1$ with $\lambda + \mu = 2$ for some $\lambda, \mu \in \cR$. Note that for each $0 \neq r \in \cR$, we have $0\neq\varphi^+(r1) = r \varphi^+(1) = ra$. Since $\lambda a = (ab)a = a(ba) = \mu a$, we get $ra = 0$ with $r = \lambda - \mu \in \cR$, which implies that $r = 0$, and it follows that $\lambda = \mu = 1$. Therefore $ab = 1 = ba$ and the claim follows.

The composition $\psi = \widehat{L}^{-1}_a \varphi$ fixes $1^+$ and $1^-$. Recall that $\cV = \VhI_n$ is the Jordan pair associated to the Jordan algebra $J = \cM_n(\FF)^{(+)}$. By \cite[Prop.~3.1]{AC21} we know that the stabilizer of $(1^+,1^-)$ in $\bAut(\cV)$ is $\bAut(J)$. Therefore, $\psi^+ = \psi^- \in \Aut_\cR(J_\cR)$. Due to Remark \ref{remarkSchemes}-3, we know that there is a faithfully flat extension $f\colon \cR\to \cS$, an element $b\in \bGL_n(\cS)$ and $\tau\in\bmu_2^{(+)}(\cS)$ such that $\bAut(J)(f)(\psi^+)=L_bR_b^{-1}\tau$. Thus, $\bAut(\cV)(f)(\psi)=\widehat{L}_b\widehat{R}_b^{-1}\tau \in \im(\theta_\cS)$. It is clear that $\bAut(\cV)(f)(\widehat{L}_a) = \widehat{L}_a = \theta_\cS \big( (a\otimes1,1) \big) \in \im(\theta_\cS)$. Since $\theta$ is a natural transformation, it follows that $\bAut(\cV)(f)(\varphi) = \bAut(\cV)(f)(\widehat{L}_a) \bAut(\cV)(f)(\psi)$. Hence, $\bAut(\cV)(f)(\varphi)\in \im(\theta_\cS)$, so the sheaf property of quotient maps is satisfied, and therefore $\theta$ is a quotient map.
\end{proof}

\begin{proposition} \label{automorphismgroupscheme.I.rectangle}
Let $m,n \in \NN$ with $m < n$. Then there is an isomorphism of affine group schemes
\begin{equation}
\gamma\colon(\bGL_m \times \bGL_n) / \bT
= \bGL_m \otimes_{\bG_m} \bGL_n \longrightarrow \bAut(\VhI_{m,n}),
\end{equation}
given by $\gamma(a \otimes b)=\widehat{L}_a\widehat{R}_{b^\Tr}$, where $\otimes_{\bG_m}$ denotes a central product relative to $\bG_m$, and
\begin{equation*}
\bT(\cR) := \{ (r1, r^{-1}1) \med r\in \cR^\times \} \cong \cR^\times = \bG_m(\cR).
\end{equation*}
\end{proposition}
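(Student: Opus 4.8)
The plan is to follow the argument of Proposition~\ref{automorphismgroupscheme.I.square}, with the idempotent $c=(c^+,c^-)$ of $(\VhI_{m,n})_\cR$ defined by $c^+:=(I_m\mid 0)\in\cM_{m,n}(\FF)$ and $c^-:=(c^+)^\Tr=\binom{I_m}{0}\in\cM_{n,m}(\FF)$ playing the role of the unit of $\cM_n(\FF)^{(+)}$ (which has no analogue here, since $\VhI_{m,n}$ has no invertible elements when $m<n$). A direct check shows that each $\widehat{L}_a\widehat{R}_b$, for $a\in\GL_m(\cR)$ and $b\in\GL_n(\cR)$, is an automorphism of $(\VhI_{m,n})_\cR$ with $+$-component $x\mapsto axb$, so only the inclusion ``$\subseteq$'' of the asserted equality is at issue. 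As in the square case, by \cite[16.7]{L75} (after base change to $\cR$) the generic trace $t(y,x)=\tr(xy)$ is $\bAut(\VhI_{m,n})$-invariant; since it is a perfect pairing $\cV^-\times\cV^+\to\FF$, each $\varphi=(\varphi^+,\varphi^-)$ is determined by $\varphi^+$, so $\varphi\mapsto\varphi^+$ is a monomorphism of affine group schemes $\bAut(\VhI_{m,n})\hookrightarrow\bGL(\cM_{m,n}(\FF))$ and it remains to identify the image.

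Given $\varphi\in\Aut_\cR((\VhI_{m,n})_\cR)$, the pair $\varphi(c):=(\varphi^+(c^+),\varphi^-(c^-))$ is again an idempotent, with the same Peirce spectrum as $c$ since $D_{\varphi^+(c^+),\varphi^-(c^-)}=\varphi^+\,D_{c^+,c^-}\,(\varphi^+)^{-1}$. The Peirce decomposition at $c$ is $\cV^+=\cV^+_2(c)\oplus\cV^+_1(c)$ with $\cV^+_2(c)=\{(u\mid 0):u\in\cM_m(\cR)\}$, $\cV^+_1(c)=\{(0\mid w):w\in\cM_{m,n-m}(\cR)\}$ and $\cV^+_0(c)=0$. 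A localization argument shows that every idempotent $e=(e^+,e^-)$ with $\cV^+_0(e)=0$ satisfies $e^+e^-=I_m$ (the idempotent $1-e^+e^-$ vanishes at every prime of $\cR$, because the alternative $e^-e^+=I_n$ is impossible for $m<n$), hence $e^-$ is a right inverse of $e^+$; from this one constructs $a_0\in\GL_m(\cR)$ and $b_0\in\GL_n(\cR)$ with $\widehat{L}_{a_0}\widehat{R}_{b_0}(c)=\varphi(c)$. Replacing $\varphi$ by $(\widehat{L}_{a_0}\widehat{R}_{b_0})^{-1}\varphi$, we may assume $\varphi(c)=c$.

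Then $\varphi$ preserves the Peirce spaces. On $\cV^+_2(c)$ the induced Jordan product is that of $\cM_m(\cR)^{(+)}$, so $\alpha:=\varphi^+|_{\cV^+_2(c)}\in\overline{\Aut}^{(+)}_\cR(\cM_m(\cR))$ by \eqref{structureMnPlus}; but the identity $\{(u\mid 0),c^-,(0\mid w)\}=(0\mid uw)$ shows that $\beta:=\varphi^+|_{\cV^+_1(c)}$ satisfies $\beta(uw)=\alpha(u)\beta(w)$, and since $\beta$ maps onto the faithful left $\cM_m(\cR)$-module $\cM_{m,n-m}(\cR)$ this forces $\alpha$ to be an associative automorphism of $\cM_m(\cR)$, i.e., conjugation by some $a\in\GL_m(\cR)$ (Remark~\ref{remarkSchemes}); in particular the transpose branch of $\overline{\Aut}^{(+)}_\cR(\cM_m(\cR))$ cannot occur (this is where $m<n$ is used). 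Composing $\varphi$ with a suitable product $\widehat{L}_a\widehat{R}_{b'}$ fixing $c$, we reduce to the case $\alpha=\id$; then $\beta$ is left $\cM_m(\cR)$-linear, hence of the form $w\mapsto wb''$ for some $b''\in\GL_{n-m}(\cR)$, so $\varphi^+$ is the $+$-component of $\widehat{R}_b$ with $b=\diag(I_m,b'')$. Therefore the original $\varphi$ is a product of maps $\widehat{L}_a,\widehat{R}_b$, as claimed. (The case $m=1$, where $\cV^+_2(c)\cong\cR$, is simpler.)

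Finally, the surjective group homomorphism $(a,b)\mapsto\widehat{L}_a\widehat{R}_b$ is a morphism of affine group schemes $\bGL_m\times\bGL_n\to\bAut(\VhI_{m,n})$ (the bookkeeping needed to make it a homomorphism is as in the proof of Proposition~\ref{propDetSim}), and its kernel is $\bT$: from $\widehat{L}_a\widehat{R}_b=\id$ one gets $axb=x$ for all $x\in\cM_{m,n}(\cR)$, hence $a=\lambda1$ and $b=\lambda^{-1}1$ with $\lambda\in\cR^\times$ by taking $x=E_{ij}$. Thus $\bAut(\VhI_{m,n})\simeq(\bGL_m\times\bGL_n)/\bT=\bGL_m\otimes_{\bG_m}\bGL_n$, which is also the answer suggested by Remark~\ref{block.decomposition} ($\bGL_m\otimes_{\bG_m}\bGL_n$ being the Levi of the corresponding parabolic of $\bPGL_{m+n}$). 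The main obstacle will be the second paragraph: showing, over an arbitrary $\cR$, that every idempotent with vanishing Peirce-$0$ component is $\{\widehat{L}_a\widehat{R}_b\}$-conjugate to $c$ — in the square case this transitivity is exactly what forces the extra $\bmu_2$, and here one must in addition handle the projective-module subtleties of the splitting $\cR^n=\im(\varphi^-(c^-))\oplus\ker(\varphi^+(c^+))$, for instance by localizing so that the complementary summand becomes free, or by first establishing the scheme isomorphism over $\overline{\FF}$ together with smoothness of $\bAut(\VhI_{m,n})$.
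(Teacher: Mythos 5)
Your strategy is genuinely different from the paper's: the paper realizes $\VhI_{m,n}$ as the $(\pm 1)$-components of the $\ZZ$-grading on $\psl_{m+n}(\FF)$ (TKK), transfers automorphisms to $\overline{\bAut}^{(-)}(\cM_{m+n}(\FF))$ via \cite[Th.~3.9]{EKmon}, uses $m<n$ to rule out the component that swaps $\cV^+$ and $\cV^-$, and concludes that every automorphism is conjugation by a block-diagonal matrix (with a separate elementary argument, via $m=1$, for $k=3$ in characteristic $3$). Your Peirce-decomposition argument at the idempotent $c=((I_m\mid 0),(I_m\mid 0)^\Tr)$ is attractive and most of its steps are sound: the reduction $e^+e^-=I_m$ via residue fields, the exclusion of the antiautomorphism branch of $\overline{\Aut}^{(+)}_\cR(\cM_m(\cR))$ through the module action on the Peirce-$1$ space, and the identification of the remaining map as $\widehat{R}_{\diag(I_m,b'')}$ are all correct, as is the kernel computation giving $\bT$.

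However, the step you yourself flag is a genuine gap, not a routine verification. To replace $\varphi$ by $(\widehat{L}_{a_0}\widehat{R}_{b_0})^{-1}\varphi$ you must produce $b_0\in\GL_n(\cR)$ whose first $m$ rows are $e^+=\varphi^+(c^+)$ and whose remaining rows annihilate $e^-$; equivalently, the direct complement $K=\{r\in\cR^{1\times n}\med re^-=0\}$ of the free row space of $e^+$ must be free of rank $n-m$. Over an arbitrary $\cR$ this module is only stably free, and stably free modules need not be free, so the transitivity of the $\{\widehat{L}_a\widehat{R}_b\}$-action on idempotents with vanishing Peirce-$0$ part is exactly the crux and is left unproved. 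Neither suggested repair closes it as stated: localizing only shows $\varphi(c)$ is \emph{locally} conjugate to $c$, leaving a patching/torsor problem which is precisely the obstruction (nontrivial $\bG_m$- or $\bGL$-torsors over $\cR$), while proving the scheme isomorphism over $\overline{\FF}$ together with smoothness (itself not established) would at best give the isomorphism $\bAut(\VhI_{m,n})\simeq(\bGL_m\times\bGL_n)/\bT$ but not the first displayed statement, which is an equality of $\cR$-point sets for every $\cR$ and does not follow from the quotient description, since the quotient map need not be surjective on $\cR$-points. The paper avoids this issue by quoting the description of $\Aut_\cR(\cM_{m+n}(\cR))$ as conjugations (Remark~\ref{remarkSchemes}-1)), so the ring-theoretic difficulty is absorbed into the cited result rather than re-proved; if you want to keep your route, you need an analogous input, e.g.\ an argument that $\ker(\varphi^+(c^+))$ is free, which does not follow from the data you have.
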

\begin{proof}

\noindent $\blacksquare$ $\boxed{\text{Case 1}}$. Let $k = m + n \geq 3$. If $k = 3$, we will also assume that $\chr \FF \neq 3$. (For the case with $k = 3 = \chr \FF$, since the Lie algebra $\mathfrak{a}_2 = \psl_3(\FF)$ is exceptional, a different approach will be used.)

Set $\cV = \VhI_{m,n}$. Note that $\psl_k(\FF) = [\cM,\cM] / (Z(\cM) \cap [\cM,\cM])$ with $\cM = \cM_k(\FF)$. Let
$$\theta \colon \overline{\bAut}^{(-)}(\cM_k(\FF)) \longrightarrow \bAut(\psl_k(\FF))$$
be defined by restriction and passing to the quotient modulo the center. Then by \cite[Th.3.9]{EKmon}, under our restrictions on $k$ and $\chr\FF$, we know that $\theta$ is an isomorphism of affine group schemes. 

As in Remark~\ref{block.decomposition}, we can identify $\VhI_{m,n}$ with the blocks of a $\ZZ$-grading on $\cM_k(\FF)^{(-)}$; let $\Gamma$ denote this grading. The isomorphism $\theta$ shows that the former grading corresponds to a $\ZZ$-grading on the quotient, $\widetilde{\Gamma} \colon \psl_k(\FF) = \bigoplus_{i=-1}^1 \cL_i$, and it is clear that $\VhI_{m,n}$ and its triple products can also be recovered from the grading $\widetilde{\Gamma}$, where we identify $\cL_1 \equiv \cV^+$, $\cL_{-1} \equiv \cV^-$. Since $\psl_k(\FF)$ and $\cV$ are simple, from the well-known correspondence between simple $\ZZ$-graded Lie algebras and simple Jordan pairs (see \cite{CS11} and references therein) it is clear that $\psl_k(\FF) = \TKK(\cV)$ (the TKK-construction using $\cV$). It is well-known that automorphism group schemes extend through the $\TKK$-construction (e.g., see \cite[Equation~(2.8)]{AC21}), so that we can identify $\bAut(\cV) \leq \bAut\big(\psl_k(\FF)\big)$; conversely, the automorphisms of $\psl_k(\FF)$ which are compatible with $\widetilde{\Gamma}$ can be restricted to $\cV$. Therefore, the elements of $\bAut(\cV)$ can be recovered (by restriction) from the elements of $\bAut\big(\psl_k(\FF)\big)$ which are compatible with $\widetilde{\Gamma}$, or from the elements of $\overline{\bAut}^{(-)}\big(\cM_k(\FF)\big)$ which are compatible with $\Gamma$.

As in the proofs above, it is easy to see that the morphism $\gamma$ is well-defined and a closed imbedding, and we need to show that it is a quotient map.

\medskip

Assume by contradiction that there exists $\psi \in \Aut_\cR(\cM_k(\cR))$ that swaps the subspaces corresponding to $\cV^+_\cR$ and $\cV^-_\cR$. By Remark~\ref{remarkSchemes}-1), we know that there is a faithfully flat extention $f\colon \cR\to \cS$ and some $M \in \GL_k(\cS)$ such that $\bAut(\cM_k(\FF))(f)(\psi)(X) = MXM^{-1}$ for each $X \in \cM_k(\cS)$. Write
$M = \left(\begin{array}{c|c} M_{11} & M_{12} \\ \hline M_{21} & M_{22} \end{array}\right)$
with $M_{11} \in \cM_m(\FF)$ and $M_{22} \in \cM_n(\FF)$.
For each $A_{12} \in \cV^+_\cS$ there exists some $B_{21} \in \cV^-_\cR$ such that
$M A M^{-1} = B$, where
$A = \left(\begin{array}{c|c} 0 & A_{12} \\ \hline 0 & 0 \end{array}\right)$,
$B = \left(\begin{array}{c|c} 0 & 0 \\ \hline B_{21} & 0 \end{array}\right)$.
Since $MA = BM$, we get that $M_{11}A_{12} = 0$ for all $A_{12} \in \cV^+_\cS$, which implies that $M_{11} = 0$.
An analogous argument with $A_{21} \in \cV^-_\cS$ shows that $M_{22} = 0$.
Since $M$ is equivalent to $\widetilde{M} = \diag(M_{12}, M_{21})$ and $\det(\widetilde{M}) = 0$ (because $m < n$), it follows that $M$ is not invertible, a contradiction.

Take $\varphi \in \Aut_\cR(\cV_\cR) \leq \overline{\Aut}_\cR^{(-)}(\cM_k(\cR))$ and write
$\varphi(X) = e_1 \psi(X) - e_2 \psi(X^\Tr)$,
where $e_1 \in \cR$ is an idempotent, $e_2 = 1 - e_1$, and $\psi \in \Aut_\cR(\cM_k(\cR))$.
Let $\cR_i = e_i \cR$. Since $\varphi$ fixes $\cV_\cR^+$ and $\cV_\cR^-$, it follows that $\psi$ swaps the subspaces $\cV_{\cR_2}^+ \leftrightarrow \cV_{\cR_2}^-$ corresponding to $\cV_{\cR_2}$, which is impossible (as we have shown above) unless $\cR_2 = 0$. Thus $e_2 = 0$ and $\varphi = \psi \in \Aut_\cR(\cM_k(\cR))$, there exists a faithfully flat extention $f\colon \cR\to \cS$ and some $M \in \GL_k(\cS)$ such that $\bAut(\cM_k(\FF))(f)(\varphi)(X) = MXM^{-1}$. Again, write
$M = \left(\begin{array}{c|c} M_{11} & M_{12} \\ \hline M_{21} & M_{22} \end{array}\right)$.
For each $A_{12} \in \cV^+_\cS$ there is some $B_{12} \in \cV^+_\cS$ such that
$M A M^{-1} = B$, where
$A = \left(\begin{array}{c|c} 0 & A_{12} \\ \hline 0 & 0 \end{array}\right)$,
$B = \left(\begin{array}{c|c} 0 & B_{12} \\ \hline 0 & 0 \end{array}\right)$.
Since $MA = BM$, we get $M_{21}A_{12} = 0$ for each $A_{12} \in \cV^+_\cR$, thus $M_{21} = 0$, and analogously $M_{12} = 0$. Then $M = \diag(M_{11},M_{22})$ and $\bAut(\cV)(f)(\varphi) = \gamma_{\cS}(M_{1,1} \otimes (M_{2,2}^\Tr)^{-1})$. 
It follows that $\gamma$ is a quotient map because of the sheaf property.

\noindent $\blacksquare$ $\boxed{\text{Case 2}}$. It remains to consider the case with $k = m + n = 3$ and $\chr \FF = 3$ (thus $m = 1$, $n = 2$). Instead, we will consider the more general case with $m = 1 \leq n$.

Let $\cV = \VhI_{1,n}$ and fix $\varphi \in \Aut_\cR(\cV_\cR)$. Since $\varphi^+ \in \GL_\cR(\cV^+_\cR) = \GL_\cR(\cR^n) \cong \GL_n(\cR)$, we can write $\varphi^+ = R_a$ for some $a \in\GL_n(\cR)$. Recall that $\varphi^+$ determines $\varphi^-$ (and viceversa), because the generic trace $t \colon \cV^- \times \cV^+ \to \FF$ is $\bAut(\cV)$-invariant (consequence of \cite[16.7]{L75}). Consequently, $\varphi = \widehat{R}_a$. Therefore, we have an isomorphism $\bGL_n \to \bAut(\cV)$ given by $\GL_n(\cR) \to \Aut_\cR(\cV_\cR)$, $a \mapsto \widehat{R}_{a^\Tr}$. Also note that $\bGL_1 \otimes_{\bG_m} \bGL_n = \bG_m \otimes_{\bG_m} \bGL_n \simeq \bGL_n$. The result follows as in the case above.
\end{proof}

\begin{corollary} \label{Aut.tildeTImn} \;
\begin{itemize}
\item[$1)$] For each $m<n\in \NN$, there is an isomorphism
\begin{equation}
\gamma\colon \bOrt_m\otimes_{\bmu_2} \bOrt_n \longrightarrow \bAut(\TtI_{m,n})
\end{equation}
given by $\gamma(a \otimes b)=\widetilde{L}_a\widetilde{R}_{b^\Tr}$.
\item[$2)$] For each $1<n\in \NN$, we have
\begin{equation}
(\bOrt_n\otimes_{\bmu_2} \bOrt_n) \rtimes \bmu_2 \simeq \bAut(\TtI_{n}),
\end{equation}
where $\bmu_2$ acts by swapping of the coordinates of $\bOrt_n\otimes_{\bmu_2} \bOrt_n$. \\
Furthermore, there is an isomorphism
\begin{equation}
\gamma \colon (\bOrt_n\otimes_{\bmu_2} \bOrt_n) \rtimes \bmu_2^{(+)} \longrightarrow \bAut(\TtI_{n})
\end{equation}
given by $\gamma(a \otimes b, \tau) = \widetilde{L}_a\widetilde{R}_{b^\Tr} \tau$.
\item[$3)$] For the trivial case, $\bAut(\TtI_1) \simeq \bmu_2$, and
$\Aut_\cR\big((\TtI_1)_\cR\big) = \{ L_a \med a \in \bmu_2(\cR) \}$.
\end{itemize}	
\end{corollary}
\begin{proof}
For each similitude $a \in \GO_n(\cR)$, let $m_a \in \cR^\times$ denote its multiplier (so that $a^\Tr a = m_a 1$). In case 1), consider the subgroup scheme $\bG$ of $\bGO_m\times\bGO_n$ whose $\cR$-points are
$$ \bG(\cR)=\{(a,b)\in\bGO_m(\cR)\times\bGO_n(\cR) \mid m_am_b=1\}. $$
Then $\bG$ is well-defined because it is a fiber product of group schemes. Consider the inclusion
\begin{equation}
\iota\colon \bOrt_m\otimes_{\bmu_2} \bOrt_n \longrightarrow \bG/\bT
\end{equation}
where $\bT$ is the same subgroup scheme as in proposition \ref{automorphismgroupscheme.I.rectangle}. Then $\iota$ is clearly a closed imbedding. To show that $\iota$ is a quotient map, take $x\in (\bG/\bT)(\cR)$. Due to the sheaf property of quotient maps there is a faithfully flat extension $\cR\to \cS$ and $a\in\bGO_m(\cR)$, $b\in \bGO_n(\cR)$ such that $m_am_b=1$ satisfying that the image of $x$ in $(\bG/\bT)(\cS)$ is $a \otimes b$. Let $\cT = \cS[Y]/\langle Y^2-m_a\rangle$, and let $y$ be the class of $Y$ in $\cT$. Since $\cT$ is a free $\cS$-module, it is a faithfully flat extension of $\cS$ and due to \cite[Theorem 13.3]{W79}, a faithfully flat extension of $\cR$. Since the image of $a \otimes b$ (and thus, the image of $x$) in $(\bG/\bT)(\cT)$ is $\iota(y^{-1}a \otimes yb)$, then the sheaf property of quotient maps is satisfied and therefore $\iota$ is a quotient map. Hence, $\iota$ is an isomorphism.

With the same arguments as in the proofs above, we see that the map
\begin{equation}
\delta \colon \bG/\bT \longrightarrow \bAut(\TtI_{m,n}),
\end{equation}
given by $\delta(a \otimes b)=\widetilde{L}_a\widetilde{R}_{b^\Tr}$,
defines a closed imbedding, and we claim that $\delta$ is a quotient map.
Take $\varphi\in \Aut_\cR\left((\VtI_{m,n})_\cR\right)$. Due to Proposition \ref{automorphismgroupscheme.I.rectangle}, to the isomorphism in Equation~\eqref{AutVIisomorphism}, and also to the sheaf property of quotient maps, it follows that there is a faithfully flat extension $f \colon \cR \to \cS$, $a\in \bGL_m(\cS)$ and $b\in \bGL_n(\cS)$ such that $\bAut(\VtI_{m,n})(f)(\varphi) = \widetilde{L}_a \widetilde{R}_b$. The following statements are equivalent:
$\varphi \in \Aut_\cS\big((\TtI_{m,n})_\cS\big)$ $\iff$ $\varphi^+ = \varphi^-$
$\iff$ $L_a R_b = L_{a^\Tr}^{-1} R_{b^\Tr}^{-1}$
$\iff$ $L_{a^\Tr a} R_{b b^\Tr} = \id$ $\iff$
$(a^\Tr a) x (b b^\Tr) = x$ for each $x \in \cM_{m,n}(\cS)$
$\iff$ $a^\Tr a = \lambda 1$, $b b^\Tr = \mu 1$ and $\lambda\mu = 1$, with $\lambda,\mu\in\cS^\times$
$\iff$ $a \in \GO_m(\cS)$ and $b \in \GO_n(\cS)$ with $m_a m_b = 1$. Since the sheaf property of quotient maps is satisfied, it follows that $\delta$ is a quotient map.
Since $\gamma = \delta \circ \iota$, it follows that $\gamma$ is well-defined and an isomorphism too.

Case 2) is proven similarly, since $\bmu_2^{(+)}(\cR) \leq \Aut_\cR\big((\TtI_n)_\cR\big)$, and case 3) is trivial.
\end{proof}

\begin{corollary}\label{c:fin1}
Let $1 < n \in\NN$. Then
\begin{align}
& \bAut(\ThI_n) \simeq \overline{\bAut}^{(+)}\big(\cM_n(\FF)\big) \times \bmu_2, \\
& \bAut\big( \cM_n(\FF)^{(+)} \big) = \overline{\bAut}^{(+)}\big(\cM_n(\FF)\big).
\end{align}
For the trivial case we have $\bAut(\ThI_1) \simeq \bmu_2$ and
$\bAut\big( \cM_1(\FF)^{(+)} \big) \simeq \mathbf{1}$ (the trivial group scheme).
\end{corollary}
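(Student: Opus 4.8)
The plan is to obtain both assertions directly from Proposition~\ref{schemesJandJTS} together with equation~\eqref{structureMnPlus}, so that no genuinely new computation is needed. I would begin with the second isomorphism: for $1 < n$ the equality $\bAut\big(\cM_n(\FF)^{(+)}\big) = \overline{\bAut}^{(+)}\big(\cM_n(\FF)\big)$ is exactly the claim \eqref{structureMnPlus} established in Remarks~\ref{remarkSchemes}-3), where the case $n \geq 3$ uses the isomorphism $\bAut\big(U(J),*\big) \simeq \bAut(J)$ and the explicit description of $\Aut_\cR(U(J)_\cR,*)$, and the case $n = 2$ uses the identification $\cM_2(\FF)^{(+)} = \cJ(V,b)$ on the traceless subspace together with Corollary~\ref{Aut.TJI}. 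So this part is already done.

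For the first isomorphism I would set $J = \cM_n(\FF)^{(+)}$ and note that, for $n \geq 2$, $J$ is a finite-dimensional \emph{central} simple Jordan $\FF$-algebra: the symmetrized product on the central simple associative algebra $\cM_n(\FF)$ produces a central simple Jordan algebra (standard structure theory, valid since $\chr\FF \neq 2$). Its associated Jordan triple system is $\ThI_n$, as recorded in Notation~\ref{notationPairsTypeI}, since $2\big((x\circ y)\circ z + (z\circ y)\circ x - (z\circ x)\circ y\big) = xyz + zyx$. Hence Proposition~\ref{schemesJandJTS} yields $\bAut(\ThI_n) \simeq \bAut(J) \times \bmu_2$, and combining this with \eqref{structureMnPlus} gives $\bAut(\ThI_n) \simeq \overline{\bAut}^{(+)}\big(\cM_n(\FF)\big) \times \bmu_2$, as desired.

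Finally, I would treat the trivial case $n = 1$. Here $\cM_1(\FF)^{(+)} = \FF$, and an $\cR$-linear algebra automorphism of $\cR$ must fix the unit, so $\bAut\big(\cM_1(\FF)^{(+)}\big) \simeq \mathbf{1}$ (equivalently, apply Proposition~\ref{schemesJandJTS} to the central simple Jordan algebra $\FF$). For $\ThI_1 = \FF$ the triple product is $\{x,y,z\} = 2xyz$; writing an $\cR$-linear automorphism of $\cR$ as multiplication by $r \in \cR^\times$, the automorphism identity evaluated at $x=y=z=1$ reads $2r = 2r^3$, i.e.\ $r^2 = 1$ (using $\chr\FF\neq 2$ and $r \in \cR^\times$), while conversely every $r \in \bmu_2(\cR)$ gives an automorphism; thus $\bAut(\ThI_1)(\cR) = \bmu_2(\cR)$ and $\bAut(\ThI_1) \simeq \bmu_2$.

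Since the heavy lifting has already been carried out in Remarks~\ref{remarkSchemes}-3) and Proposition~\ref{schemesJandJTS}, there is no serious obstacle here; the only point deserving a line of justification is the applicability of Proposition~\ref{schemesJandJTS}, namely that $\cM_n(\FF)^{(+)}$ is central simple, which is classical, plus the routine bookkeeping in the case $n = 1$.
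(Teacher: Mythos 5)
Your proof is correct and follows the same route as the paper, which derives the corollary directly from Proposition~\ref{schemesJandJTS} combined with \eqref{structureMnPlus}. Your extra checks (centrality of $\cM_n(\FF)^{(+)}$ and the explicit $n=1$ computation) are fine but not substantively different from the paper's argument.
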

\begin{proof}
Consequence of Prop.~\ref{schemesJandJTS} and Equation~\eqref{structureMnPlus}.
\end{proof}

\begin{corollary}\label{c:fin} For every $n>1$, $\TtI_n$ is not isomorphic to $\ThI_n$.
\end{corollary}
\begin{proof}
It suffices to prove that the respective automorphism group schemes are not isomorphic. In order to prove that, we will show that $\Lie((\bOrt_n\otimes_{\bmu_2}\bOrt_n)\rtimes \bmu_2)\simeq \mathfrak{so}_n(\FF)\oplus\mathfrak{so}_n(\FF)$ and that $\Lie\left(\overline{\bAut}^{(+)}\big(\cM_n(\FF)\big) \times \bmu_2\right)\simeq \mathfrak{\gl}_n(\FF)$. In order to do so, we should notice first that if $\bG$ and $\bH$ are smooth affine group schemes, then, \cite[22.13]{KMRT98} implies that $\Lie(\bG/\bH)\simeq \Lie(\bG)/\Lie(\bH)$. Also, as a consequence of \cite[22.12]{KMRT98}, if $\bG$ and $\bH$ are smooth, $\bG\rtimes \bH$ is smooth. Thus, again, from  \cite[22.13]{KMRT98}, we get an exact sequence of vector spaces
$$ 0\to \Lie(\bG)\to \Lie(\bG\rtimes \bH)\to \Lie(\bH)\to 0, $$
where the morphisms are algebra morphisms, implying that if $\Lie(\bG)=0$, then, $\Lie(\bG\rtimes \bH)\simeq \Lie(\bH)$ and if $\Lie(\bH)=0$, then, $\Lie(\bG\rtimes \bH)\simeq \Lie(\bG)$. Finally, from \cite[21.4]{KMRT98}, we get that for any affine group schemes $\bG$ and $\bH$, $\Lie(\bG\times \bH)\simeq \Lie(\bG)\oplus \Lie(\bH)$. With all this, the calculation of the Lie algebras follows from the fact that  $(\bOrt_n\otimes_{\bmu_2}\bOrt_n)\rtimes \bmu_2\simeq\left((\bOrt_n\times\bOrt_n)/\bmu_2\right)\rtimes\bmu_2$ and the fact that $\overline{\bAut}^{(+)}\big(\cM_n(\FF)\big) \times \bmu_2\simeq \left(\left(\bGL_n/\bG_m\right)\rtimes \bmu_2\right)\times \bmu_2$.
	
\end{proof}

\begin{remark}
	This last corollary would also follow from \cite[III.1]{S85} by taking the parameters $\Phi=\id$ and $A=\mathrm{I}$ to get  $\TtI_n$ and taking $\Phi$ as the transposition and $A=\mathrm{I}$ to get $\ThI_n$.
\end{remark}
\begin{remark}
Recall from \cite[Ex.~4.7]{A22} that there is a decomposition, as a tensor product of metric generalized Jordan pairs, given by $\VhI_{m,n} \cong \VhI_{1,m} \otimes \VhI_{1,n} \otimes \cV_{-2}$,
where the metrics of $\VhI_{m,n}$, $\VhI_{1,m}$ and $\VhI_{1,n}$ are their generic traces.
The term $\cV_\lambda$, with $\lambda \in \FF$, denotes a $1$-dimensional metric generalized Jordan pair with a metric $b$, which implies that $\bAut(\cV_\lambda, b) \simeq \bG_m$. In general, for a tensor product of metric generalized Jordan pairs we have that $\bAut(\cV, b_1) \otimes_{\bG_m} \bAut(\cW, b_2) \lesssim  \bAut(\cV \otimes \cW, b_1 \otimes b_2)$, see \cite[Prop.~4.3-4)]{A22}. Consequently, we have
\begin{equation} \label{tensor.decomp}
\bAut(\VhI_{1,m}, t) \otimes_{\bG_m} \bAut(\VhI_{1,n}, t) \lesssim  \bAut(\VhI_{m,n}, t).
\end{equation}
Note that Prop.~\ref{automorphismgroupscheme.I.rectangle} actually shows that there is an isomorphism in Equation~\eqref{tensor.decomp} if $m < n$. On the other hand, if $m = n$, Prop.~\ref{automorphismgroupscheme.I.square} shows that there is an additional factor $\bmu_2$ appearing in $\bAut(\VhI_n, t)$, which produces the transposition automorphisms; this corresponds to the swapping automorphisms $\bmu_2$ of $\VhI_{1,n} \otimes \VhI_{1,n}$, which interchange the two copies of $\VhI_{1,n}$.
\end{remark}

\noindent\textbf{Acknowledgements}.
The authors are thankful to Irene Paniello, for giving access to useful references, and to Alberto Elduque, for providing useful feedback. Thanks are also due to the anonymous referee, for the careful reading of our manuscript, and the many useful comments and corrections.


\end{document}